\newtheorem{The}{Theorem}[section]
\newtheorem*{The*}{Theorem}
\newtheorem*{Con}{Conjecture}
\newtheorem{Pro}[The]{Proposition}
\newtheorem{Cor}[The]{Corollary}
\newtheorem{Lem}[The]{Lemma}
\theoremstyle{definition}
\newtheorem{Def}[The]{Definition}
\theoremstyle{remark}
\newtheorem{Rk}[The]{Remark}
\newcommand{\F}{\mathbb{F}}
\newcommand{\Z}{\mathbb{Z}}
\newcommand{\Q}{\mathbb{Q}}
\newcommand{\M}{\mathbf{L}}
\newcommand{\G}{\mathbf{G}}
\newcommand{\T}{\mathbf{T}}
\newcommand{\U}{\mathbf{U}}
\newcommand{\Irr}{\mathrm{Irr}}
\newcommand{\overbar}[1]{\mkern 1.5mu\overline{\mkern-1.5mu#1\mkern-1.5mu}\mkern 1.5mu}
\begin{document}

\title{Basic sets for unipotent blocks of finite reductive groups in bad characteristic}          
\author{Reda Chaneb}

\address{Universit\'e Paris Diderot, UFR de Math\'ematiques,
B\^atiment Sophie Germain, 5 rue Thomas Mann, 75205 Paris CEDEX 13, France.}
\email{reda.chaneb@imj-prg.fr}

\date{\today}                      
%\sloppy                       % Ne pas faire déborder les lignes dans la marge

\maketitle

\begin{abstract}
  We show the existence of a unitriangular basic set for unipotent blocks of simple reductive groups of classical type in bad characteristic with some exceptions. Then, we introduce an algorithm to count unipotent irreducible Brauer characters and we verify that this algorithm is effective for exceptional adjoint groups.
\end{abstract}

\section*{Introduction}
 Let $\G$ be a connected reductive group defined over $\F_q$ where $q$ is a power of a prime number $p$, and $F$ be the corresponding Frobenius endomorphism. We will denote by $G = \G^F$ the finite group of fixed points of $\G$ under $F$. It is a \emph{finite reductive group}. We are concerned with the representation theory of $G$ in \emph{tranverse characteristic} and in particular, we are interested in finding basic sets for certain blocks of $G$ called \emph{unipotent}. More precisely let us fix a prime number $\ell$ different from $p$. %We are interested in finding (unitriangular) basic sets for unipotent $\ell$-blocks of finite reductive groups. 
It is to be expected that one can find a natural basic set of characters for unipotent blocks of finite reductive groups, such that the decomposition matrix has unitriangular shape in this basic set. The behaviour of such basic sets depends on whether $\ell$ is a good or a bad prime number for $\G$. Let us recall the previous results on these basic sets.

\smallskip

 When $\ell$ is good and does not divide the order of the component group of the center of $\G$, the unipotent characters form a basic set for the unipotent blocks. This was first proved for $\mathrm{GL}_n(q)$ by Dipper  \cite{dipper_decomposition_1985-1} and $\mathrm{GU}_n(q)$ by Geck \cite{geck_decomposition_1991} using the so called Generalized Gelfand-Graev Representations (GGGRs). In both cases the basic set was shown to be unitriangular. The case of a general finite reductive group was settled by Geck-Hiss \cite{geck_basic_1991} and Geck \cite{geck_basic_1993}, but the question whether the decomposition matrix has unitriangular shape remains open in general.

\smallskip When $\ell$ is a bad prime number, much less is known: 
 for  classical groups with connected center and $\ell=2$ the existence of a unitriangular basic set for the unipotent
  blocks was shown in \cite{geck_basic_1994}, using again GGGRs. More recently,
  Kleshchev-Tiep found a unitriangular basic set for $\mathrm{SL}_n(q)$
  (\cite{kleshchev_representations_2009}) by studying the restrictions
  of characters of $\mathrm{GL}_n(q)$. Using this method and the results of Geck
  for $\mathrm{GU}_n(q)$, Denoncin generalized this result to $\mathrm{SU}_n(q)$
  (\cite{denoncin_stable_2017}).

%In \cite{dipper_decomposition_1985} and
%  \cite{dipper_decomposition_1985-1}, Dipper showed that unipotent
%  characters form a unitriangular basic set for the unipotent blocks
%  of $GL_n(q)$.
%  \item This result was generalized to $GU_n(q)$ by Geck in
%  \cite{geck_decomposition_1991} by using the so called Generalized Gelfand
%  Graev Representations (GGGRs).
%  \item In \cite{geck_basic_1991} and
%  \cite{geck_basic_1993}, Geck and Hiss gave a basic set when $\ell$
%  verify some conditions (see Theorem \ref{sec:GHbasicset}).
%  \item For  classical groups with connected center and $\ell=2$ and by using
%  GGGRs, the existence of a unitriangular basic set for the unipotent
%  block has been shown in \cite{geck_basic_1994}.
%  \item More recently,
%  Kleshchev and Tiep found a unitriangular basic set for $SL_n(q)$
%  (\cite{kleshchev_representations_2009}) by studying the restrictions
%  of characters of $GL_n(q)$.
%  \item Using this method and the results of Geck
%  for $GUn(q)$, Denoncin generalized this result to $SU_n(q)$
%  (\cite{denoncin_stable_2017}).

\smallskip

Following Geck--Hiss \cite{geck_basic_1991} and Geck \cite{geck_basic_1993,geck_basic_1994}, the strategy for constructing a unitriangular basic set consists in counting the number of irreducible unipotent Brauer characters, and then finding enough projective modules which will satisfy some unitriangularity condition. In this paper,  we achieve this in the case of classical groups (with the exception of the spin and half-spin groups), see Theorem \ref{thm:main}.

%
%The layout of this paper is as follows. In chapter 1, we recall some generalities about basic sets and introduce the results of Geck and Hiss. In chapter 2, we extend some results of Geck which provides a basic set for the unipotents block when $G$ is a classical group and $\ell=2$, more precisely we show this result.

\begin{The*}
  Assume that $p$ is an odd prime number, $\ell=2$ and let $\G$ be a simple group of type $B$, $C$ or $D$ except spin of half-spin. Then there exists a unitriangular basic set for the unipotent blocks of $G$.
\end{The*}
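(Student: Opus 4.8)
The plan is to reduce the theorem, one isogeny type at a time, to Geck's theorem on unitriangular basic sets for classical groups with connected centre \cite{geck_basic_1994}, by means of a regular embedding $\iota\colon\G\hookrightarrow\tilde\G$ and a descent of basic sets along the inclusion $G=\G^{F}\trianglelefteq\tilde G=\tilde\G^{F}$. Running through the simple groups of type $B$, $C$ or $D$ (and their twisted forms) that are neither spin nor half-spin, one is left with: the adjoint groups; the symplectic group $\mathrm{Sp}_{2n}$; and, in type $D$, the groups $\mathrm{SO}^{\pm}_{2n}$. When $Z(\G)$ is connected — in particular for the adjoint group $\mathrm{SO}_{2n+1}$, and, after a standard reduction, for the remaining adjoint groups — the statement is a consequence of \cite{geck_basic_1994}. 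The genuinely new cases are thus $\mathrm{Sp}_{2n}$ and $\mathrm{SO}^{\pm}_{2n}$, whose centre has order $2=\ell$; each admits a regular embedding into the corresponding conformal group $\tilde\G$ ($\mathrm{CSp}_{2n}$, resp.\ $\mathrm{CSO}^{\pm}_{2n}$), a classical group with connected centre covered by \cite{geck_basic_1994}. This is precisely where the spin and half-spin groups drop out: their natural regular embeddings land in general spin groups, which are not among the classical groups treated in \cite{geck_basic_1994}.

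Fix such a $\G$ and such an embedding; then $\tilde G/G$ is cyclic (a subgroup of $\F_q^{\times}$) of order divisible by $\ell=2$. Let $\tilde Z=\ker(\tilde\G^{*}\twoheadrightarrow\G^{*})$, the one-dimensional central torus of the dual group; then $\widehat{\tilde G/G}$ is canonically identified with $\tilde Z^{F}$, and for $z\in\tilde Z^{F}$ the linear character $\hat z$ of $\tilde G$ (trivial on $G$) carries $\mathcal{E}(\tilde G,1)$ onto $\mathcal{E}(\tilde G,z)$; since $z$ is central in $\tilde\G^{*}$ one obtains $\mathcal{E}_{\ell}(\tilde G,z)=\hat z\otimes\mathcal{E}_{\ell}(\tilde G,1)$. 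From this one checks that the union $\mathcal{C}$ of those $\ell$-blocks of $\tilde G$ lying over the unipotent $\ell$-blocks of $G$ is exactly $\bigsqcup_{z}\hat z\otimes(\text{unipotent }\ell\text{-blocks of }\tilde G)$, the union over the $\ell'$-part of $\widehat{\tilde G/G}$, and that $\mathcal{C}$ is stable under $\tilde G$-conjugation and under tensoring by all of $\widehat{\tilde G/G}$, the $\ell$-part of such a twist acting trivially on the $\ell$-modular category. Feeding the basic set supplied by \cite{geck_basic_1994} for the unipotent blocks of $\tilde G$ through the twists $\hat z$ produces a unitriangular basic set $\tilde{\mathcal{B}}$ of $\mathcal{C}$ with the same stability. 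I would then restrict $\tilde{\mathcal{B}}$ to $G$: the constituents of $\mathrm{Res}^{\tilde G}_{G}\tilde{\mathcal{B}}$ all lie in the unipotent $\ell$-blocks of $G$ (restricting an $\mathcal{E}(\tilde G,z)$-character with $z$ central of $\ell'$-order lands in $\mathcal{E}(G,1)$), they exhaust those blocks (the $z=1$ part already surjects onto the unipotent characters of $G$), and a descent lemma for basic sets along normal subgroups with abelian quotient — which I would establish first — transports both the count and the unitriangular shape from $\mathcal{C}$ to the unipotent blocks of $G$.

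The crux is this last descent. Because $\ell=2$ divides $[\tilde G:G]$, coprime Clifford theory is unavailable, so one must combine the $\widehat{\tilde G/G}$-stability of $\tilde{\mathcal{B}}$ with precise control of the restriction to $G$ of the irreducible $2$-Brauer characters of $\tilde G$, both to match cardinalities — so that one obtains a basic set and not merely a spanning set — and to transport the unitriangular shape block by block. The remaining points, namely confirming that \cite{geck_basic_1994} covers all the conformal groups and twisted forms that occur, and carrying out the reduction of the adjoint groups to their connected-centre ambient groups, require care but are comparatively routine.
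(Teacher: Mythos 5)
Your reduction breaks at the descent step, and not merely because the Clifford theory is non-coprime: the set you propose to restrict can never be a basic set of the unipotent blocks of $G$. The basic set of \cite{geck_basic_1994} for the unipotent blocks of $\tilde G$ consists of the unipotent characters, and your $\tilde{\mathcal{B}}$ is the collection of twists $\hat z\otimes\tilde\rho$ with $\hat z$ trivial on $G$; hence $\mathrm{Res}^{\tilde G}_{G}(\hat z\otimes\tilde\rho)=\mathrm{Res}^{\tilde G}_{G}\tilde\rho$, and every irreducible constituent of these restrictions is a unipotent character of $G$. So the image under the decomposition map $d$ of the set you obtain spans a sublattice of rank at most $|\mathcal{E}(G,1)|$, whereas $|\Irr_k(B_1)|=m_1(G)$ is the number of unipotent classes of the finite group $G$ (Proposition \ref{sec:countingirrkb1}), which is strictly larger as soon as some $A_{\G}(u)$ is non-trivial: already for $G=\mathrm{Sp}_4(q)$ and $\ell=2$ there are $6$ unipotent characters but $7$ unipotent Brauer characters (compare the $\mathrm{SL}_2(3)$ example in Remark \ref{bad-examples}). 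The missing Brauer characters can only be reached through ordinary characters in series $\mathcal{E}(G,s)$ with $s$ a non-trivial quasi-isolated $2$-element, and no twist-and-restrict procedure applied to unipotent characters of $\tilde G$ produces such characters, since twisting by $\hat z$ does not change the restriction. This is exactly why, in type $A$, Kleshchev--Tiep and Denoncin do not restrict the unipotent basic set of $\mathrm{GL}_n$ as it stands, but first replace some of its members by characters in other Lusztig series and then use fine information on restrictions of irreducible $\ell$-modular representations from $\mathrm{GL}_n$ to $\mathrm{SL}_n$ (\cite{kleshchev_representations_2009}, \cite{denoncin_stable_2017}); no analogue of either ingredient is available for $\mathrm{Sp}_{2n}$ or $\mathrm{SO}^{\pm}_{2n}$, so the ``descent lemma'' you defer is not a routine statement --- it is the whole difficulty, and for the specific $\tilde{\mathcal{B}}$ you chose it is false. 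Your explanation of the spin/half-spin exclusion is also not the right one: every simple group admits a regular embedding into a group with connected centre and the same root system, to which \cite{geck_basic_1994} applies; the actual obstruction in those cases is that $A_{\G}(u)$ may be non-abelian, which blocks Proposition \ref{sec:existence-basic-set-2}.

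The paper's proof proceeds entirely inside $G$ and avoids any Clifford-theoretic descent. It first proves that $m_1(G)$ equals the number of unipotent classes of $G$, using the Bonnaf\'e--Rouquier Morita equivalence $B_s(G)\sim B_1(L_s)$, a bijection $S_{2'}(G)\leftrightarrow S_{2'}(G^*)$ compatible with centralisers, and induction on the dimension. It then pairs the Kawanaka projective characters $\gamma_u$ (one for each unipotent class of $G$) with the Alvis--Curtis duals of the characters supplied by Geck--H\'ezard and Taylor: these lie in series $\mathcal{E}(G,s)$ with $s$ quasi-isolated, hence a $2$-element, so they belong to $B_1$, and ordering the classes by dimension makes the matrix of scalar products lower unitriangular, so Proposition \ref{sec:existence-basic-set} applies. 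If you wish to salvage a reduction to $\tilde G$, you would at the very least have to include in the $\tilde G$-basic set characters from series $\mathcal{E}(\tilde G,\tilde s)$ with $\tilde s$ mapping to non-trivial $2$-elements of $G^*$ and then control their restrictions and the $2$-modular Clifford theory; none of that is provided by \cite{geck_basic_1994}.
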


In the case of the exceptional groups, we provide a new algorithm to count the number of unipotent Brauer characters which we conjecture to be valid in general. We verify the output of our method in the following cases:
%
%In the last chapter, we discuss about a method to count number of irreducible modular characters in the unipotent blocks when $\ell$ is bad and $p$ is good for the three following cases:

\begin{itemize}
\item $\G$ is simple of type $B, C$ or $D$  
\item $\G$ is an exceptional group of adjoint type
\item $\G$ is $\mathrm{SL}_n(\overbar{\mathbb{F}}_p)$ (and so $G$ is either $\mathrm{SL}_n(q)$ or $\mathrm{SU}_n(q)$)
\end{itemize}
The advantage of our counting argument is that it generalizes Lusztig's parametrisation of unipotent characters, and hence is very close to a parametrisation of the unipotent Brauer characters. It is also compatible with Kawanaka's modified version of the GGGRs (\cite{noriaki_kawanaka_shintani_1987}). We believe that these representations could be used to produce a natural unitriangular basic set. 

\smallskip

The paper is organised as follows: in the first section we recall the results of Geck--Hiss and Geck on the basic sets for groups of Lie type. Section 2 is devoted to the case of classical groups in characteristic 2, where we prove our main theorem.  In Section 3 we explain how to extend Lusztig's parametrisation of unipotent characters to count unipotent Brauer characters. Finally, we provide tables for exceptional groups of adjoint type where our counting argument can be shown to give the expected result. 

\section{Background}
\subsection{Basic sets}

Let $G$ be a finite group. We fix an $\ell$-modular system $(K, R, k)$ for $G$, which consists of:

\begin{itemize}
\item a discrete valuation ring $R$ with unique maximal ideal $\mathfrak{m}$,
\item the fraction field $K:=\mathrm{Frac}(R)$, which we assume to have characteristic~$0$ and to be big enough for $G$ (\emph{i.e.} $K$ contains $|G|$-roots of unity),
\item the residue field $k:=R/ \mathfrak{m}$, of characteristic $\ell$.
\end{itemize}

If $\Lambda$ is either $K$ or $k$, we denote by $R_{\Lambda}(G)$ the Grothendieck group of the category of finitely generated $\Lambda G$-modules and by $\Irr_{\Lambda}(G)$ the set of isomorphism classes of irreducible $\Lambda G$-modules (it is a $\Z$-basis of $R_{\Lambda}(G)$). We will denote by $d: R_K(G) \to R_k(G)$ the decomposition map. By convention, we will call the \emph{decomposition matrix} of $G$ the transpose of the matrix of $d$ for the bases $\Irr_K(G)$ and $\Irr_k(G)$.

\begin{Rk}
Throughout this paper we may identify the irreducible $\Lambda G$-modules with their characters.
\end{Rk}

\smallskip

Recall that there is a unique decomposition $RG=\bigoplus_{i=1}^nB_i$ into indecomposable subalgebras (called \emph{blocks} of $G$). This decomposition induces a partition
$$\Irr_{\Lambda}(G)=\bigsqcup \limits_{i=1}^n \Irr_{\Lambda}(B_i)$$
of the set of irreducible representations (over $K$ or $k$). We will still call blocks of $G$ the sets $\Irr_{\Lambda}(B_i)$. The following definition is taken from \cite[14.3]{cabanes_representation_2004}.

\begin{Def}
  Let $B$ be a union of blocks and $R_k(B)$ be the subgroup of $R_k(G)$ generated by $\Irr_k(B)$. A \emph{basic set of characters} is a subset $\mathcal{B} \subset \Irr_K(B)$ such that $d(\mathcal{B})$ is a $\Z$-basis of $R_k(B)$. 
  
  Let $D$ be the part of the decomposition matrix of $B$ with rows labelled by $\mathcal{B}$. We say that  $\mathcal{B}$ is \emph{unitriangular} if, up to permutations of rows and columns, $D$ has lower unitriangular shape.  
\end{Def}

\begin{Rk}
  A unitriangular basic set provides a natural parametrization of Irr$_k(B)$ by elements of Irr$_K(B)$.
\end{Rk}

% \vspace*{0.5cm}

 In \cite{geck_basic_1991}, Geck and Hiss proved the existence of a basic set of characters in the case where $\ell$ is good and does not divide the order of the fixed point under $F$ of the component group of the center. This was generalised by Geck in the case where $\ell=2$ and $\G$ is a classical group with connected center. In addition, the basic set was shown to be unitriangular in that special case (see \cite{geck_basic_1994}). In the next section we  show that the result still holds when we drop the assumption that the center of $\G$ is connected, with the possible exception of the spin and half-spin groups.

\subsection{Blocks for finite groups of Lie type}

Let us recall some results about modular representation theory for finite reductive groups: let $\G^*$ be a dual group of $\G$. We still denote by $F$ the corresponding Frobenius endomorphism of $\G^*$. We set $G^*:=\G^{*F}$. By \cite{lusztig_irreducible_1977} (see also \cite{digne_representations_1991}), there is a partition of the set of irreducible characters into so-called rational Lusztig series:
$$\Irr_K(G)=\bigsqcup \limits_{s}\mathcal{E}(G,s)$$
where $s$ runs over representatives of $G^*$-conjugacy classes of semisimple elements of $G^*$. The partition of $\Irr_K(G)$ into Lusztig series behaves particularly well with respect to the partition into blocks. 

\begin{The}[Brou\'e--Michel \cite{broue_bloc_1989}]\label{sec:BM-blocks} 
  Let  $s \in G^*$ be a semisimple element of order prime to $\ell$ and set
$$B_s(G):=\bigcup \limits_t \mathcal{E}(G^F, st)$$
where $t$ runs over the set of semisimple $\ell$-elements of $C_{G^{*F}}(s)$. Then $B_s(G)$ is a union of blocks.
\end{The}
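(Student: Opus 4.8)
The plan is to follow the approach of Brou\'e and Michel. Recall that a subset $\mathcal A\subseteq\Irr_K(G^F)$ is a union of $\ell$-blocks of $G^F$ if and only if the central idempotent $e_{\mathcal A}:=\sum_{\chi\in\mathcal A}e_\chi$ of $Z(KG^F)$ actually lies in $Z(RG^F)$. Since $e_\chi=\frac{\chi(1)}{|G^F|}\sum_{g\in G^F}\overline{\chi(g)}\,g$, it is therefore enough to prove that for every $g\in G^F$ the coefficient
\[
\lambda_s(g)\ :=\ \frac{1}{|G^F|}\sum_{\chi\in B_s(G)}\chi(1)\,\overline{\chi(g)}
\]
lies in $R$. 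So the whole statement becomes a question about the $\ell$-adic valuations of character sums over a union of Lusztig series.

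The first step is to reduce to the case where $g$ is $\ell$-regular. Writing $g=g_\ell g_{\ell'}$ and invoking Brauer's second main theorem together with the theory of generalised decomposition numbers, one rewrites $\lambda_s(g)$ modulo $\mathfrak m$ in terms of analogous character sums inside the local subgroup $C_{G^F}(g_\ell)$ evaluated at the $\ell'$-element $g_{\ell'}$. Now $C_{\G}(g_\ell)^F$ is again the fixed-point group of a (possibly disconnected) reductive group, and --- once one knows that these truncations commute with Deligne--Lusztig induction, so that $B_s(G)$ ``restricts'' to the corresponding union of Lusztig series of $C_{G^F}(g_\ell)$ --- an induction on $|G^F|$ disposes of the case $1\neq g_\ell\notin Z(G^F)$. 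When $g_\ell$ is central the central character takes an $\ell$-power root of unity value at $g_\ell$, so $\overline{\chi(g)}=\overline{\chi(g_{\ell'})}$ and we are again reduced to the $\ell$-regular case.

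The second step treats $g$ $\ell$-regular: expanding $\chi(g)=\sum_{\varphi}d_{\chi\varphi}\,\widehat\varphi(g)$ over the irreducible $\ell$-Brauer characters $\varphi$, the problem becomes showing that $|G^F|_\ell$ divides $\sum_{\chi\in B_s(G)}\chi(1)d_{\chi\varphi}$ for every $\varphi$. Here Deligne--Lusztig theory enters: every irreducible constituent of $R_{\T}^{\G}(\theta)$ lies in the single series $\mathcal E(G^F,s')$ determined by the semisimple element $s'\in G^{*F}$ dual to $(\T,\theta)$, so that $B_s(G)$ is a union of series and, on the level of uniform functions, is spanned by the $R_{\T}^{\G}(\theta)$ whose associated semisimple element has $\ell'$-part conjugate to $s$. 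Combined with Lusztig's Jordan decomposition of characters --- the multiplicity-free bijections $\mathcal E(G^F,st)\leftrightarrow\mathcal E(C_{\G^*}(st)^F,1)$ and their compatibility with $R_{\T}^{\G}$ --- this lets one transfer the divisibility to a ``principal-block-like'' union of unipotent series in $C_{\G^*}(s)^F$, where it holds because the relevant projective degrees already carry the full factor $|G^F|_\ell$.

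The step I expect to be the main obstacle is controlling the \emph{non-uniform} part: $e_{B_s(G)}$ is not a uniform class function, so one cannot simply expand it in Deligne--Lusztig characters, and it is precisely this that forces the inductive passage through centralisers of $\ell$-elements; the genuinely delicate case there is when $C_{\G^*}(s)$ (or some $C_{\G^*}(g_\ell)$) is disconnected, so that the interaction between the component group, the Jordan decomposition, and block linkage has to be analysed directly rather than quoted. (With more recent technology one could instead deduce the statement from a Morita equivalence in the style of Bonnaf\'e--Rouquier between the blocks gathered in $B_s(G)$ and the unipotent blocks of $C_{\G^*}(s)^F$, at the price of extra hypotheses on $s$.)
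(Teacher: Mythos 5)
The paper does not prove this statement at all: it is quoted, with attribution, from Brou\'e--Michel \cite{broue_bloc_1989}, so there is no internal proof to compare with and your proposal has to be measured against the original argument. Your opening move --- $B_s(G)$ is a union of blocks if and only if $e_{B_s}:=\sum_{\chi\in B_s(G)}e_\chi$ has all coefficients $\frac{1}{|G^F|}\sum_{\chi\in B_s(G)}\chi(1)\overline{\chi(g)}$ in $R$ --- is indeed how Brou\'e--Michel begin. But from there your plan goes a different way and stalls exactly at the point you flag as ``the main obstacle'', which in fact is not an obstacle: the crucial input you are missing is that the relevant class function \emph{is} uniform. Since every $R_{\T}^{\G}(\theta)$ has all its irreducible constituents in a single rational series, the (uniform) regular character decomposes series by series, so for each $s'$ the projection $\sum_{\chi\in\mathcal{E}(G,s')}\chi(1)\chi$ is uniform, hence so is the degree-weighted sum over $B_s(G)$. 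This lets one write the coefficients of $e_{B_s}$ explicitly in terms of Deligne--Lusztig characters; the character formula, the integrality of Green functions, and the summation over the semisimple $\ell$-elements $t$ (dually, over the $\ell$-parts of the characters $\theta$ of the tori), which produces the $\ell$-part of torus orders needed for $\ell$-integrality, then finish the proof. No Brauer second main theorem, no induction through $C_{G^F}(g_\ell)$ with its disconnected-centralizer complications, and no compatibility of Jordan decomposition with $R_{\T}^{\G}$ is required.

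Concretely, then, the gap is twofold: (i) the divisibility statement at the heart of the theorem is never actually established in your sketch --- step 2 reduces it to an unproven claim about $\sum_{\chi}\chi(1)d_{\chi\varphi}$ and an appeal to Jordan decomposition whose compatibility with Deligne--Lusztig induction (especially for disconnected $C_{\G^*}(st)$) is itself a hard theorem, not available as a black box here; and (ii) the local reduction in step 1 presupposes that the truncation maps commute with the series decomposition in centralizers of $\ell$-elements, which is essentially a sibling of the statement being proved and is, in Brou\'e--Michel's paper, a separate result rather than an ingredient. So as written the proposal is a reasonable framework with the central argument left open, rather than a proof; the fix is to use the uniformity of the degree-weighted projection onto each rational series and argue directly on character values, as in the original paper.
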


%\vspace*{0.5cm}

When there is no ambiguity on the underlying group, we will denote $B_s(G)$ by $B_s$. The blocks contained in $B_1$ will be called \emph{unipotent blocks}. Under some restrictions on $\ell$, Geck and Hiss proved that there is a natural basic set for $B_s$.

\begin{The}[Geck--Hiss \cite{geck_basic_1991}]\label{sec:GHbasicset}
  Assume that $\ell$ is good for $G$ and does not divide the order of $(Z_{\G}/Z_{\G}^{\circ})^F$. Let $s \in G^*$ be a semisimple element of order prime to $\ell$. Then $\mathcal{E}(G,s)$ is a basic set for $B_s$.
\end{The}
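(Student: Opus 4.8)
The plan is to prove, in order: that $\mathcal{E}(G,s)$ lies inside $\Irr_K(B_s)$; that the problem reduces to the unipotent blocks of a connected reductive group; that the $\ell$-modular reductions of the unipotent characters are $\Z$-linearly independent; and that they generate the relevant Grothendieck group over $\Z$, which together with independence forces them to be a $\Z$-basis. The first point is immediate from Theorem \ref{sec:BM-blocks}: since $B_s(G)$ is a union of blocks and $\mathcal{E}(G,s)$ is one of the Lusztig series composing it, every character in $\mathcal{E}(G,s)$ lies in $\Irr_K(B_s)$.

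For the reduction I would introduce a regular embedding $\G \hookrightarrow \widetilde{\G}$ into a group with connected centre. Then the derived group of $\widetilde{\G}^*$ is simply connected, so $C_{\widetilde{\G}^*}(\tilde s)$ is connected for every semisimple $\tilde s$, and Lusztig's Jordan decomposition of characters gives a degree-preserving bijection between $\mathcal{E}(\widetilde{G},\tilde s)$ and the unipotent characters of a finite reductive group dual to $C_{\widetilde{\G}^*}(\tilde s)$. One checks this bijection is compatible, on $B_{\tilde s}(\widetilde{G})$, with the decomposition map — the essential input being that Lusztig induction commutes with $d$ on the $\Z$-span of the relevant Deligne–Lusztig characters — so the statement for $\widetilde{G}$ follows from the unipotent case for the smaller group. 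One then descends from $\widetilde{G}$ to $G$ by Clifford theory, and this is exactly where the hypothesis $\ell \nmid |(Z_{\G}/Z_{\G}^{\circ})^F|$ is used: it forces the multiplicities occurring in the restriction of characters of $\widetilde{G}$ to $G$ to be prime to $\ell$, so that the ordinary and $\ell$-modular branchings agree and a (unitriangular) basic set for $B_{\tilde s}(\widetilde{G})$ restricts to one for $B_s(G)$.

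It thus remains to show that $\mathcal{E}(H,1)$ is a basic set for $B_1(H)$, where $H=\mathbf{H}^F$ with $\ell$ good for $\mathbf{H}$ and $\ell \nmid |(Z_{\mathbf{H}}/Z_{\mathbf{H}}^{\circ})^F|$. For linear independence I would attach to each unipotent character $\chi$ a projective character $\Psi_\chi$ of $H$ so that the matrix $(\langle \Psi_\chi,\chi'\rangle)_{\chi,\chi'\in\mathcal{E}(H,1)}$ is triangular — unitriangular after rescaling — with respect to the order given by Lusztig's $a$-function. The natural source of such projectives is Kawanaka's generalized Gelfand–Graev characters $\Gamma_u$, indexed by the unipotent classes of $\mathbf{H}$: each $\Gamma_u$ is induced from a linear character of a $p$-subgroup, hence (since $\ell\ne p$) reduces mod $\ell$ to a projective module, and Lusztig's theorem on the unipotent support of irreducible characters controls the multiplicities $\langle\Gamma_u,\chi\rangle$, producing the triangular pattern (for $p$ bad one falls back on ordinary Gelfand–Graev characters, Harish-Chandra induction from Levi subgroups, and Hecke-algebra comparisons). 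Since $\langle\Psi_\chi,\chi'\rangle=\langle\overline{\Psi_\chi},d(\chi')\rangle$ by Brauer reciprocity, triangularity forces $\{d(\chi):\chi\in\mathcal{E}(H,1)\}$ to be $\Z$-linearly independent, whence $|\mathcal{E}(H,1)|\le|\Irr_k(B_1(H))|$.

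The main obstacle is the reverse inequality together with $\Z$-generation: one must prove $|\mathcal{E}(H,1)|=|\Irr_k(B_1(H))|$ and that the square matrix above is invertible over $\Z$, so that the block of the decomposition matrix on the rows $\mathcal{E}(H,1)$ is unimodular and the $d(\chi)$ form a $\Z$-basis. For the uniform contributions this is accessible: the $R_T^\theta$ span the uniform functions, and the elementary identity $d(R_T^\theta)=d(R_T^{\theta_{\ell'}})$ — valid because an $\ell$-power-order character of the torus is trivial on $\ell$-regular elements, which are where the reductions are computed — shows that the span of $d(\mathcal{E}(H,1))$ already contains the reduction of every uniform function in $B_1(H)$. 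The genuine difficulty is the non-uniform characters and the exact count; here one argues Harish-Chandra series by Harish-Chandra series, comparing Hecke algebras (Howlett–Lehrer) and organising everything by the $a$-function, and it is precisely the goodness of $\ell$ that makes the $a$-function compatible with $\ell$-modular reduction, so that each simple $kH$-module in $B_1(H)$ is labelled by a unique unipotent character of minimal $a$-value and the count closes up. Granting this, $\{d(\chi):\chi\in\mathcal{E}(H,1)\}$ is a $\Z$-basis of $R_k(B_1(H))$, and unwinding the two reduction steps shows that $\mathcal{E}(G,s)$ is a basic set for $B_s$.
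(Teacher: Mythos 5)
This theorem is not proved in the paper: it is quoted from Geck--Hiss \cite{geck_basic_1991}, so your proposal has to be measured against their argument (whose key ingredients are exactly the ones this paper re-proves in Section~2 for $\ell=2$). Measured that way, your outer shell is right --- Brou\'e--Michel places $\mathcal{E}(G,s)$ inside $\Irr_K(B_s)$, and one must then prove that $d(\mathcal{E}(G,s))$ both generates $R_k(B_s)$ and is linearly independent --- but the central step is missing. The heart of the Geck--Hiss proof is the generation statement: for every semisimple $\ell$-element $t\in C_{G^*}(s)$ and every $\chi\in\mathcal{E}(G,st)$, the class function $d(\chi)$ lies in the $\Z$-span of $d(\mathcal{E}(G,s))$. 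This is precisely where both hypotheses are consumed: because $\ell$ is good and does not divide $|(Z_{\G}/Z_{\G}^{\circ})^F|$, the centraliser $C_{\G^*}(t)$ is a \emph{connected Levi subgroup} $\M^*$ (compare the Lemma and Corollary~\ref{cor:levi} of this paper, which establish exactly this statement in the case relevant there); then every element of $\mathcal{E}(G,st)$ occurs, up to sign, in the Lusztig induction from the dual Levi of $\mathcal{E}(\M^{*F}\text{-dual},st)=\hat t\otimes\mathcal{E}(\cdot,s)$ with $\hat t$ a linear character of $\ell$-power order; Lusztig induction commutes with $d$ (its character formula involves only values at $\ell$-regular, indeed unipotent, elements), and $\hat t$ is trivial on $\ell$-regular elements, which gives the claim. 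In your write-up this mechanism is replaced by the identity $d(R_{\T}^{\theta})=d(R_{\T}^{\theta_{\ell'}})$, which, as you concede, only controls uniform functions, and for the rest by an appeal to ``Harish-Chandra series, Hecke algebras and the $a$-function'', together with the assertion that goodness of $\ell$ ``makes the $a$-function compatible with $\ell$-modular reduction''. That assertion is not a theorem you can invoke, and the step you yourself flag as the main obstacle (generation plus the count $|\mathcal{E}(G,s)|=|\Irr_k(B_s)|$) is exactly the content of the theorem; so the proposal has a genuine gap at its core. Once generation is known for all $\ell'$-series, independence follows from a counting argument over all series, using the bijection between semisimple $\ell'$-classes of $G$ and $G^*$ of \cite[Prop.~4.2]{geck_basic_1991} --- the same bijection this paper extends in Proposition~\ref{sec:bijoddorder} --- not from projective characters.

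Two further corrections. First, the generalized Gelfand--Graev apparatus you use for linear independence belongs to the later unitriangularity results (Geck's subsequent papers and Section~2 here), needs additional hypotheses on $p$ and on unipotent supports, and is not needed for this statement. Second, in the proposed Clifford-theoretic descent from a regular embedding, the point is not that ``the multiplicities in the restriction are prime to $\ell$'' (restriction from $\widetilde{G}$ to $G$ is multiplicity-free by Lusztig); what the hypothesis $\ell\nmid|(Z_{\G}/Z_{\G}^{\circ})^F|$ would control is the order of the group of linear characters of $\widetilde{G}/G$ permuting the constituents. In any case Geck--Hiss work directly with $G$, the hypothesis being used through the connectedness of centralisers of $\ell$-elements as above rather than through Clifford theory.
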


\begin{Rk}\label{bad-examples}
  If we remove the assumptions on $\ell$ in Theorem \ref{sec:GHbasicset}, $\mathcal{E}(G,s)$ is no longer a basic set:
  \begin{itemize}
  \item If $G=\mathrm{SL}_2(3)$ and $\ell=2$, then $\ell$ divides
    $|(Z_{\G}/Z_{\G}^\circ)^F|$. The group $G$ has 2 unipotent characters but
    $\Irr_k(B_1)$ contains 3 elements (see \cite{bonnafe_representations_2011}).
  \item If $G=G_2(q)$ and $\ell=2$, then $\ell$ is bad for $G$. Here, $d(\mathcal{E}(G,1))$ generates $R_{k}(B_1)$ but is not a basic set (see \cite{geck_basic_1991}).
\end{itemize}
\end{Rk}

%\vspace*{0.5cm} 

The theorem above does not indicate whether the basic set is unitriangular or not. However, it was conjectured by Geck to be the case, at least for unipotent blocks.

\begin{Con}[Geck  {\cite[Conj. 3.4]{geck_modular_1997}}]
  Suppose that $\ell$ is good for $G$ and does not divide the order of $(Z_{\G}/Z_{\G}^{\circ})^F$. Then the unipotent characters form a unitriangular basic set of $B_1$.
\end{Con}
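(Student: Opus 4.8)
The plan is to \emph{keep} the set $\mathcal{E}(\G^F,1)$ of unipotent characters --- which is already known to be a basic set for $B_1$ by Theorem~\ref{sec:GHbasicset} --- and to prove that, after a suitable ordering, the square matrix obtained from the decomposition matrix of $B_1$ by retaining only the rows indexed by $\mathcal{E}(\G^F,1)$ is unitriangular. The ordering to use is the one coming from Lusztig's $\mathbf{a}$-function: write $\mathcal{E}(\G^F,1)=\{\rho_1,\dots,\rho_m\}$ with $\mathbf{a}(\rho_1)\le\cdots\le\mathbf{a}(\rho_m)$, refined arbitrarily to a total order; equivalently one may order by the closure order on the unipotent supports $C_{\rho_i}$, since a larger $\mathbf{a}$-value corresponds to a smaller support.

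The main tool is the family of generalised Gelfand--Graev representations, in Kawanaka's refined form \cite{noriaki_kawanaka_shintani_1987}, exactly as in Geck's treatment of the connected-centre case \cite{geck_basic_1994}. For a unipotent element $u\in\G^F$, together with the auxiliary data needed in bad characteristic, one has a GGGR $\Gamma_u$; since $\ell\ne p$ it is induced from a module of a $p$-subgroup, hence is a genuine projective $RG$-module, and so is its projection $\mathbf{e}_{B_1}\Gamma_u$ onto $B_1$. The crucial input is Lusztig's description of the decomposition of $\Gamma_u$ into irreducible characters: $\langle\Gamma_u,\rho\rangle=0$ unless $C_\rho$ lies in the closure of the class of $u$, and for each $\rho\in\mathcal{E}(\G^F,1)$ there is a choice of $u$ --- and here Kawanaka's modification is needed, in order to reach the non-special characters as well --- for which $\langle\Gamma_u,\rho\rangle=1$ while every other unipotent constituent $\rho'$ satisfies $\mathbf{a}(\rho')>\mathbf{a}(\rho)$. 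This yields, for every $i$, a projective $\Psi_i:=\mathbf{e}_{B_1}\Gamma_{u_i}$ whose ordinary character, restricted to $\mathcal{E}(\G^F,1)$, equals $\rho_i+\sum_{j:\,\mathbf{a}(\rho_j)>\mathbf{a}(\rho_i)}c_{ij}\rho_j$ with $c_{ij}\in\Z_{\ge0}$ (its values on the non-unipotent characters of $B_1$ playing no role in what follows).

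From such a triangular system of projectives the unitriangularity follows by a standard positivity argument. Let $\Phi_1,\dots,\Phi_m$ be the projective indecomposable characters of $B_1$ --- there are $m$ of them, since $|\Irr_k(B_1)|=|\mathcal{E}(\G^F,1)|$ by Theorem~\ref{sec:GHbasicset} --- and write $\Phi_k=\sum_\chi D_{\chi k}\chi$ with $D_{\chi k}\in\Z_{\ge0}$ the decomposition numbers, and $\Psi_i=\sum_k e_{ik}\Phi_k$ with $e_{ik}\in\Z_{\ge0}$ because $\Psi_i$ is a genuine projective. Pairing with $\rho_i$ gives $1=\sum_k e_{ik}D_{\rho_i k}$, a sum of non-negative integers, so there is a unique $\sigma(i)$ with $e_{i\sigma(i)}=D_{\rho_i\sigma(i)}=1$; pairing with $\rho_j$ for $\mathbf{a}(\rho_j)<\mathbf{a}(\rho_i)$ gives $0=\sum_k e_{ik}D_{\rho_j k}\ge D_{\rho_j\sigma(i)}$, whence $D_{\rho_j\sigma(i)}=0$. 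The same relation forces $\sigma$ to be injective, hence a bijection onto $\{1,\dots,m\}$; the only delicate point is to rule out $\sigma(i)=\sigma(i')$ for $i\ne i'$ with $\mathbf{a}(\rho_i)=\mathbf{a}(\rho_{i'})$, which is handled by observing that the $\Psi_i$ with a fixed $\mathbf{a}$-value are linearly independent modulo the span of the characters of strictly larger $\mathbf{a}$-value, so their columns $\sigma(i)$ are distinct. Reordering the columns by $\sigma$ while keeping the $\mathbf{a}$-order on the rows $\mathcal{E}(\G^F,1)$ then puts the relevant square submatrix into (upper, hence after a final reversal lower) unitriangular shape.

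The step I expect to be the genuine obstacle is the input from Lusztig's theory: one needs the unipotent support map $\rho\mapsto C_\rho$, its precise compatibility with the GGGRs, and the existence for \emph{every} $\rho$ --- including the non-special ones --- of a Kawanaka datum hitting $\rho$ with multiplicity exactly $1$ and all other unipotent constituents strictly higher in the $\mathbf{a}$-order; and all of this must be available with no restriction on $p$, in particular for $p$ bad, where the Springer correspondence, the classification of nilpotent orbits, and the very construction of the GGGRs each require separate work. Everything else --- that GGGRs are projective, the projection onto $B_1$, and the combinatorial extraction of unitriangularity above --- is formal once that geometric input is in place.
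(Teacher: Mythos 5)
The statement you are trying to prove is not proved in the paper at all: it is stated as a \emph{conjecture} (Geck's Conjecture 3.4), and the introduction explicitly records that the unitriangularity question for good $\ell$ ``remains open in general''. So there is no proof of the paper's to compare yours with, and your proposal should be judged as an attempt at the open problem. As such it has a genuine gap, and you have located it yourself: the entire difficulty is concentrated in the ``input from Lusztig's theory'' that you assume. What Lusztig's results on generalised Gelfand--Graev representations actually give is control of $\langle\Gamma_u,\rho\rangle$ in terms of the \emph{wave front set} $C^*_\rho$ (the unipotent support of the Alvis--Curtis dual, as in the paper's Section 2), and for $\rho$ whose wave front set is the class of $u$ the multiplicity is expressed through the character of $A_\G(u)$ attached to $\rho$; it is \emph{not} true that for every unipotent character, in particular for the non-special members of a family (e.g.\ cuspidal unipotent characters attached to non-trivial pairs in $\mathcal{M}(\mathcal{G}_\mathcal{F})$), one can choose an ordinary GGGR hitting it with multiplicity exactly $1$ and all other unipotent constituents of strictly larger $\mathbf{a}$-value. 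Reaching those characters is exactly what Kawanaka's \emph{modified} GGGRs are supposed to do, but at the level of generality you need their construction and the required multiplicity-one and $\mathbf{a}$-monotonicity properties are themselves conjectural (Kawanaka established them only in special situations); the results of Geck--H\'ezard and Taylor quoted in the paper (Proposition~\ref{sec:existence-basic-set-2}) produce suitable \emph{ordinary characters} in non-unipotent series paired against ordinary GGGRs, which is why the paper can prove Theorem~\ref{thm:main} for unipotent blocks at $\ell=2$, but they do not produce, for each unipotent character, a projective with the identity-type pairing you require. So your second and third paragraphs (projectivity of GGGRs, block projection, and the positivity/triangularity extraction, which is essentially Proposition~\ref{sec:existence-basic-set} of the paper) are fine but formal; the first paragraph's key claim is precisely the unproven heart of the conjecture, and citing \cite{noriaki_kawanaka_shintani_1987} does not supply it.

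Two smaller points. First, your closure/ordering statement is stated for the unipotent support $C_\rho$, but the GGGR multiplicity bounds go through the wave front set $C^*_\rho$; one must dualise (replace $\rho$ by $\rho^*$) as is done in the proof of Theorem~\ref{thm:main}, otherwise the inequality on $\mathbf{a}$-values points the wrong way. Second, the conjecture as stated makes no assumption on $p$, while the whole GGGR machinery (and your appeal to \cite{taylor_generalized_2016}) requires $p$ good; any honest write-up along these lines would have to either assume $p$ good or say how bad $p$ is to be handled.
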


% We denote by $m_s$, the number of irreducible Brauer characters of $B_s$ (If we need to indicate the underlying group, we will use the notation $B_s(G)$ and $m_s(G)$). The blocks contained in $B_1$ will be called the unipotent blocks.
 \vspace*{0.5cm}

 \section{A basic set for classical groups in characteristic 2}

Throughout this section we will assume that $\G$ has simple components of type $B$, $C$ $D$, that $p$ is odd and that $\ell=2$.

 \subsection{Finding a basic set}
Under the condition that the center of $\G$ is connected, Geck showed in \cite{geck_basic_1994} the existence of a unitriangular basic set for the unipotent blocks. We will generalise this result to some cases where the center is no longer connected by using the same methods. We start off with some results which are valids for aribtrary finite groups.

 %We have seen in the previous part that for $\ell=2$ the set of unipotent characters is not necessarly a basic set, so in order to find a basic set, we will use this proposition:

\begin{Pro}\label{sec:existence-basic-set}
Let $B$ be a union of blocks and $n:=|\Irr_k(B)|$. Assume that there exist ordinary characters $\chi_1, \ldots, \chi_n$ and projective characters $\Phi_1, \ldots, \Phi_n$ in $B$ such that the matrix of scalar products $\big(\langle\chi_i, \Phi_j\rangle\big)_{1 \leq i,j \leq n}$ has lower unitriangular shape. Then the characters $\chi_1, \ldots, \chi_n$ form a unitriangular basic set for $B$.
\end{Pro}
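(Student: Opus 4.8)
The plan is to show that the $\chi_i$ are linearly independent in $R_k(B)$ after applying $d$, and then a counting argument forces them to be a basic set. First I would recall that for a projective character $\Phi$, the scalar product $\langle \chi, \Phi \rangle$ depends only on the image $d(\chi)$; more precisely, if we write $d(\chi_i) = \sum_{\varphi \in \Irr_k(B)} m_{i\varphi}\, \varphi$ in terms of the irreducible Brauer characters, and if $\Phi_j = \sum_{\varphi} n_{j\varphi}\, \Phi_\varphi$ where $\Phi_\varphi$ is the projective indecomposable attached to $\varphi$, then $\langle \chi_i, \Phi_j \rangle = \sum_{\varphi} m_{i\varphi} n_{j\varphi}$, using $\langle \psi, \Phi_\varphi \rangle = $ the multiplicity of $\varphi$ in $d(\psi)$. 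In matrix terms, writing $M = (m_{i\varphi})$ for the $n \times n$ matrix (rows indexed by the $\chi_i$, columns by $\Irr_k(B)$) and $N = (n_{j\varphi})$ similarly, the hypothesis says $M N^{t}$ is lower unitriangular, in particular invertible over $K$.

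Next I would extract the consequences. Since $M N^{t}$ is invertible, $M$ has rank $n$; as $M$ is $n \times n$ this means $M$ is invertible over $\Q$, so the vectors $d(\chi_1), \ldots, d(\chi_n)$ are linearly independent in $R_k(B) \otimes \Q$. But $R_k(B)$ is free of rank $n = |\Irr_k(B)|$, so $d(\chi_1), \ldots, d(\chi_n)$ span a finite-index subgroup. To upgrade this to a $\Z$-basis I would use the standard fact (Brauer reciprocity / properties of the Cartan and decomposition matrices) that the projective characters $\Phi_1, \ldots, \Phi_n$, viewed through $N$, together with the $d(\chi_i)$, pin down the index: concretely, $\det(M N^{t}) = \pm 1$ forces $\det(M) = \pm 1$ once one knows $\det(N)$ is (up to sign) a divisor of it — the cleanest route is to observe directly that $M N^{t} = \mathrm{id}$ up to a lower-unitriangular (hence determinant $1$) change of basis, so $\det M \cdot \det N = \pm 1$ with both determinants integers, whence $\det M = \pm 1$. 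Therefore $\{d(\chi_i)\}$ is a $\Z$-basis of $R_k(B)$, i.e. $\{\chi_i\}$ is a basic set.

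For unitriangularity, I would argue that the ordering and the sign pattern of $M N^{t}$ being lower unitriangular propagates to $M$ itself after reordering $\Irr_k(B)$. Since $N^{t}$ is an integer matrix with $\det N^{t} = \pm 1$ and $M N^{t}$ lower unitriangular, one can reorder the columns (i.e. relabel the Brauer characters) so that $M$ is itself lower unitriangular: indeed $M = (M N^{t})(N^{t})^{-1}$, and one checks that the product of a lower-unitriangular matrix with a unimodular integer matrix can, after a suitable permutation of columns chosen greedily from the top row down, be brought to lower-unitriangular form — this is where a small amount of care is needed but no deep input. Then $M$ being lower unitriangular is exactly the statement that $\mathcal{B} = \{\chi_i\}$ is a unitriangular basic set.

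The main obstacle is the last bookkeeping step: passing from "$MN^{t}$ lower unitriangular" to "$M$ lower unitriangular after a permutation of $\Irr_k(B)$". One has to be slightly careful because $(N^{t})^{-1}$ need not be triangular, so the triangular shape of $M$ is not automatic and must be engineered by choosing the labelling of the Brauer characters appropriately; the key point that makes it work is that $M$ has $\pm 1$ determinant and nonnegative integer entries (decomposition numbers), which severely constrains its shape. Everything else — the reduction to the matrix identity $\langle \chi_i, \Phi_j\rangle = (MN^t)_{ij}$ and the rank/determinant count — is routine.
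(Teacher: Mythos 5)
Your setup (Brauer reciprocity, $\langle\chi_i,\Phi_j\rangle=(MN^{t})_{ij}$) and your determinant argument showing that $d(\chi_1),\dots,d(\chi_n)$ form a $\Z$-basis of $R_k(B)$ are correct. The gap is in the unitriangularity step, which is in fact the heart of the proposition. The general fact you invoke --- that a lower-unitriangular matrix times a unimodular integer matrix can be brought to lower-unitriangular shape by a column permutation --- is false: take the identity times $\left(\begin{smallmatrix}1&1\\0&1\end{smallmatrix}\right)$. Your fallback, that $\det M=\pm1$ together with non-negativity of the decomposition numbers ``severely constrains'' $M$, is also insufficient on its own: $M=\left(\begin{smallmatrix}1&1\\1&2\end{smallmatrix}\right)$ has non-negative integer entries and determinant $1$, yet no column permutation makes it lower unitriangular. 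So as written, the passage from ``$MN^{t}$ lower unitriangular'' to ``$M$ lower unitriangular after relabelling $\Irr_k(B)$'' is not proved.

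What is missing is the simultaneous use of the non-negativity of $N$, i.e.\ of the multiplicities $\langle\varphi_l,\Phi_j\rangle\geq 0$, which you record in your setup but never exploit. The paper isolates exactly this as a separate lemma: if $A,B,C$ are square matrices with non-negative integer entries, $AB=C$ and $C$ is lower unitriangular, then $A$ is lower unitriangular up to a permutation of columns. The proof is entrywise: for $i<j$, $0=c_{ij}=\sum_k a_{ik}b_{kj}$ forces every product $a_{ik}b_{kj}$ to vanish, while $1=c_{ii}$ forces a unique $k$ with $a_{ik}=b_{ki}=1$; if some row of $A$ contained two entries equal to $1$, the corresponding rows of $B$ would have to be concentrated in one column and one would get $c_{ii}\geq 2$, a contradiction. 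Supplying this lemma (applied to $A=M$, $B=N^{t}$) closes your gap, and it also makes your separate rank/determinant count superfluous: once the $n\times n$ submatrix of decomposition numbers is unitriangular, the $d(\chi_i)$ are automatically a $\Z$-basis of $R_k(B)$.
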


%\vspace*{0.5cm}

\begin{proof}
Let $d: R_K(G) \longrightarrow R_k(G)$ be the decomposition map. Since $\Phi_j$ are projective characters, we have the well-known relation $\langle\chi_i, \Phi_j\rangle = \langle d(\chi_i), \Phi_j\rangle$. 
Let us denote by $\varphi_1, \ldots, \varphi_n$ the irreducible Brauer characters lying in $B$. For $ 1 \leq i \leq n$, we can write  $d(\chi_i)=\sum_{l=1}^n d_{\chi_i, \varphi_l} \varphi_l$. So we have
$$\langle\chi_i, \Phi_j\rangle =  \sum_{l=1}^n d_{\chi_i, \varphi_l}\langle\varphi_l, \Phi_j\rangle.$$ 
In terms of matrices, this equality is:
$$(d_{\chi_i, \varphi_j})_{1 \leq i,j \leq n} \times (\langle\varphi_i, \Phi_j\rangle)_{1 \leq i, j \leq n} = (\langle\chi_i, \Phi_j\rangle)_{1 \leq i,j \leq n}.$$

    We want to show that $(d_{\chi_i, \varphi_k})$ is lower unitriangular, using the fact that the matrix on the right side of the equality is lower unitriangular. The result follows from the lemma below.
  \end{proof}

  \vspace*{0.5cm}

  \begin{Lem}
Let $A$, $B$ and $C$ be three square matrices of size $n \times n$ with non-negative integer coefficients such that $AB=C$ and $C$ is lower unitriangular. Then, up to permutations of columns, $A$ is unitriangular as well.
\end{Lem}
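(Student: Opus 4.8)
The plan is to exploit the combinatorial rigidity forced by the sign conditions: all entries of $A$, $B$, $C$ are nonnegative integers, $AB = C$, and $C$ has $1$'s on the diagonal and $0$'s above it. The key observation is that each diagonal entry $C_{ii} = 1$ is a sum $\sum_k A_{ik}B_{ki}$ of nonnegative integers equal to $1$, so exactly one term contributes: there is a unique index $\sigma(i)$ with $A_{i\sigma(i)} = B_{\sigma(i)i} = 1$, and $A_{ik}B_{ki} = 0$ for all $k \neq \sigma(i)$.

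First I would show $\sigma$ is a bijection. If $\sigma(i) = \sigma(j) = c$ for $i \neq j$, then $B_{ci} = B_{cj} = 1$, and since $C$ is lower triangular, for the larger of $i,j$ — say $i > j$ — we have $C_{ji} = 0 = \sum_k A_{jk}B_{ki} \geq A_{jc}B_{ci} = A_{jc}$, so $A_{jc} = 0$; but $\sigma(j) = c$ means $A_{jc} = 1$, a contradiction. Hence $\sigma$ is injective, thus a permutation of $\{1,\dots,n\}$. Now let $A'$ be the matrix obtained from $A$ by permuting its columns via $\sigma^{-1}$, so that $A'_{i,i} = A_{i,\sigma(i)} = 1$; equivalently $A' = A P$ where $P$ is the permutation matrix sending column $\sigma(i)$ to column $i$. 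I must then check $A'$ is lower triangular, i.e. $A'_{ij} = A_{i,\sigma(j)} = 0$ whenever $i < j$.

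To prove this last point I would argue by a downward/upward induction on the structure, using the triangularity of $C$ more fully. Fix $i < j$ and suppose for contradiction $A_{i,\sigma(j)} \geq 1$. Consider the entry $C_{ij} = \sum_k A_{ik}B_{kj}$. Taking $k = \sigma(j)$ gives the term $A_{i,\sigma(j)}B_{\sigma(j),j} = A_{i,\sigma(j)} \cdot 1 \geq 1$. Since all terms are nonnegative and $i < j$ forces $C_{ij} = 0$, this is already the contradiction — so in fact $A_{i,\sigma(j)} = 0$ for all $i < j$ directly, with no induction needed. Therefore $A' = AP$ is lower triangular with $1$'s on the diagonal, i.e. lower unitriangular.

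The step I expect to be the only real subtlety is establishing that $\sigma$ is well-defined and injective, since that is where one genuinely uses both the nonnegativity of the integer entries (to split the sum $1 = \sum_k A_{ik}B_{ki}$ into a single surviving term) and the triangularity of $C$ (to rule out collisions $\sigma(i) = \sigma(j)$). Once the permutation $\sigma$ is in hand, the triangularity of $A P$ drops out immediately from a second application of $C_{ij} = 0$ for $i < j$. One should also remark that the hypothesis is slightly asymmetric — we only get that $A$ is unitriangular after a column permutation, with no control over $B$ — which is exactly what is needed in the proof of Proposition~\ref{sec:existence-basic-set}, where $A$ plays the role of the decomposition matrix.
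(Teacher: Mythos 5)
Your proof is correct and rests on the same two observations as the paper's: the diagonal entries $c_{i,i}=1$ single out, for each $i$, a unique index $\sigma(i)$ with $a_{i,\sigma(i)}=b_{\sigma(i),i}=1$, and the vanishing of the above-diagonal entries of $C$ together with non-negativity forces the remaining zeros. The only difference is organizational: you define the map $\sigma$ explicitly and prove it is a permutation directly from $c_{j,i}=0$ (for $j<i$), whereas the paper reaches the same conclusion by a row-by-row elimination that invokes $\det(A),\det(B)\neq 0$; your version is, if anything, a cleaner write-up of the same argument.
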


\begin{proof}
  Let us write $A=(a_{i,j})_{1 \leq i,j \leq n}$, $B=(b_{i,j})_{1 \leq i,j \leq n}$, $C=(c_{i,j})_{1 \leq i,j \leq n}$. The matrix $C$ is lower unitriangular so 
  \begin{align}
    0&=c_{i,j}= \sum \limits_{k=1}^n a_{i,k}b_{k,j} \quad \forall 1 \leq i<j \leq n.\\
    1&=c_{i,i}= \sum \limits_{k=1}^na_{i,k}b_{k,i}  \quad \forall 1 \leq i \leq n.
    \end{align}
    All the coefficients are non-negative so that implies:
    \begin{itemize}
    \item $a_{i,k}b_{k,j}=0$ for all $1 \leq k \leq n$ and $1 \leq i<j \leq n$;
     \item Given $1 \leq i \leq n$, there is a unique integer $k$ with $1\leq k \leq n$ such that $a_{i,k}=b_{k,i}=1$. If $k' \neq k$, then $a_{i,k'}=0$ or $b_{k',i}=0$.
     \end{itemize}
     Therefore, in every column of $A$, there is a coefficient whose value is 1 and such that every coefficient above is 0. Similarly, in each row of $B$ there is a coefficient whose value is $1$ and such that  every following coefficient is zero. Every coefficient of the first row of $A$ (resp. the first column of $B$) should be either $0$ or $1$ and, since $\det(A)$ (resp. $\det(B)$) is non-zero, there is at least one coefficient whose value is $1$. Let us show that the first row of $A$ has a unique coefficient whose value is $1$: let $k_1,\ldots,k_s$ be the indices such that $a_{1,k_i}=1$ and suppose $s\geq2$. By (1), when $1 \leq i \leq s$, and $2 \leq j \leq n$, $0=a_{1,k_i}b_{k_i,j}=b_{k_i,j}$ so every coefficient the $k_i$th line of $B$ is $0$ except the first one, whose value is $1$. So $c_{1,1}= \sum_{i=1}^s a_{1,k_i}b_{k_i,1}=s$, which gives a contradiction. The same argument applied to every row of $A$ shows that, up to permutation of columns, $A$ is lower unitriangular.
   \end{proof}

%   \vspace*{0.5cm}

Consequently, in order to find a unitriangular basic set of characters for the unipotent blocks it is enough to:
   \begin{itemize}
   \item Count the number of elements of $\Irr_k(B_1)$, which is the content of the section below.
   \item Find projective and ordinary characters satisfying the assumptions of Proposition \ref{sec:existence-basic-set}. This can be done by using the theory of Generalized Gelfand-Graev characters and results of Geck, H\'ezard and Taylor (\cite{geck_unipotent_2008}, \cite{taylor_unipotent_2013}).
   \end{itemize}

\subsection{Counting the unipotent Brauer characters}

From now on, we will denote by $m_s$ the number of elements in $\Irr_k(B_s)$, or $m_s(G)$ if we need to specify the underlying group $G$. In \cite{geck_basic_1994}, Geck showed that when $\G$ has simple components of classical type, $m_1(G)$ is equal to the number of unipotent classes of $G$, provided that the center of $\G$ is connected. The aim of this part is to show that the connectedness condition on the center can be removed. If $\G$ has simple components of type $B$, $C$, $D$ then we first recall that the centraliser in $\G$ of a semisimple element of odd order is a rational Levi subgroup of $\G$.

\begin{Lem}
Let $\G$ be a reductive group such that $Z(\G)/Z(\G)^\circ$ is a $2$-group.

\begin{enumerate}
\item[$\mathrm{(1)}$] For any Levi subgroup $\M$ of $\G$, $Z(\M)/Z(\M)^\circ$ is a $2$-group.
\item[$\mathrm{(2)}$] If $s$ is a semi-simple element of odd order of $\G^*$, then $C_{\G^*}(s)$ is connected. If moreover the order of $s$ is divisible by good primes only then $C_{\G^*}(s)$ is a rational Levi subgroup of $\G$.
\end{enumerate}
\end{Lem}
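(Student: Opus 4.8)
The plan is to prove the two statements in turn, relying mainly on the structure theory of reductive groups and the description of centralisers of semisimple elements.

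For part (1), I would use the standard fact that for a Levi subgroup $\M$ of $\G$ one has $Z(\G)^\circ \subseteq Z(\M)^\circ$ and, more importantly, that $Z(\M)/Z(\M)^\circ$ is controlled by the root datum: writing $\M = C_\G(S)$ for a subtorus $S$, or better taking $\M$ to be a standard Levi corresponding to a subset $I$ of the simple roots, the component group $Z(\M)/Z(\M)^\circ$ can be read off from the quotient of the (co)weight lattice by the sublattice spanned by the coroots in $I$. Since this lattice is a sublattice of the one defining $Z(\G)/Z(\G)^\circ$ (the coroots of $I$ are among the coroots of $\G$), there is a surjection, up to the relevant dualities, relating the two component groups; concretely, $Z(\M)/Z(\M)^\circ$ is a subquotient of $Z(\G)/Z(\G)^\circ$ times a free part that vanishes because we are looking at torsion. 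The cleanest route is: $\M$ and $\G$ have the same derived-group coroot lattice restricted to $I$, and the torsion part of $Y(\T)/\Z\Phi^\vee_I$ injects into (a quotient of) the torsion part of $Y(\T)/\Z\Phi^\vee$. Hence if the latter is a $2$-group so is the former. I would phrase this via the exact sequence relating $Z(\M)$, $Z(\G)$ and the torus $Z(\M)/Z(\G)$, or simply cite the relevant lemma in Bonnaf\'e's or Digne--Michel's book on the structure of $Z(\M)/Z(\M)^\circ$.

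For part (2), the centraliser $C_{\G^*}(s)$ of a semisimple element has identity component $C^\circ_{\G^*}(s)$ a reductive group whose root system is $\{\alpha \in \Phi : \alpha(s) = 1\}$, and the component group $C_{\G^*}(s)/C^\circ_{\G^*}(s)$ embeds into $(Z(\G^*)/Z(\G^*)^\circ)$-type data — more precisely, by a theorem going back to Steinberg (see also Bonnaf\'e), this component group is a subquotient of the torsion of the cocharacter lattice modulo the coroots, which on the dual side corresponds to $Z(\G)/Z(\G)^\circ$; since that is a $2$-group by hypothesis, the component group of $C_{\G^*}(s)$ is a $2$-group. But $s$ has odd order, so $\langle s\rangle$ is an odd-order group acting, and the component group $A_{\G^*}(s)$ is generated by the image of... — the key point is that for $s$ of odd order the relevant obstruction class lives in a $2$-group on one hand and is killed by an odd integer on the other, forcing it to be trivial. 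So $C_{\G^*}(s)$ is connected. For the ``moreover'' part, when the order of $s$ involves only good primes, a result of Deriziotis / Fleischmann--Janiszczak (or the treatment in Cabanes--Enguehard) shows that $C^\circ_{\G^*}(s)$ is not merely reductive but is in fact a Levi subgroup of $\G^*$; one then transports this through duality to get a rational Levi of $\G$, checking $F$-stability from $s \in G^*$.

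The main obstacle I expect is part (2): getting the component-group bound precise enough. The statement ``$C_{\G^*}(s)/C^\circ_{\G^*}(s)$ is a $2$-group'' needs the correct functorial description of that component group in terms of $Z(\G)/Z(\G)^\circ$ (via duality, since $s$ lives in $\G^*$), and then the observation that an element of finite odd order cannot generate a nontrivial cyclic subgroup of a $2$-group. The ``Levi'' conclusion is the place where the good-primes hypothesis is used in an essential way — for bad primes the connected centraliser of a semisimple element need not be a Levi (pseudo-Levi subgroups genuinely occur) — so I would be careful to invoke exactly the version of the Deriziotis-type theorem that gives a Levi under the good-prime assumption, and then note that rationality is automatic because $s \in G^* = \G^{*F}$ implies $C_{\G^*}(s)$, hence $C^\circ_{\G^*}(s)$, is $F$-stable. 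Part (1) should be essentially bookkeeping with lattices once the right reference is in hand.
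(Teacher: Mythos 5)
Your proposal is correct and follows essentially the same route as the paper: part (1) by lattice bookkeeping with the root sublattice of the Levi being pure in the root lattice (equivalently, the known surjection onto $Z(\M)/Z(\M)^\circ$), and part (2) by combining the embedding of $C_{\G^*}(s)/C_{\G^*}(s)^\circ$ into $Z(\G)/Z(\G)^\circ$ with the fact that its exponent divides the odd order of $s$, then invoking the good-prime theorem (the paper cites Geck--Hiss, Prop.~2.1; you cite the equivalent Deriziotis-type results) for the Levi conclusion. The only differences are choices of references, not of argument.
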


\begin{proof}
%  \vspace{0.1cm}
Let $X$ be the group of characters of a rational torus $\T$ contained in $\M$, $\Phi$ be the corresponding set of roots of $\G$ and $\Phi_{\M} \subset \Phi$ be the set of roots of $\M$. Given $A$ a subgroup of $X$ and $\mathbf{S}$ a subtorus of $\T$ we define:
$$\begin{aligned}
 A^{\perp}& \, :=\{t \in \T \, \mid\, \chi (t)=0 \ \forall \chi \in A\} \\
\mathbf{S}^{\perp}& \, :=\{\chi \in X \, \mid\, \chi (t)=0 \ \forall t \in \mathbf{S} \}.
\end{aligned}$$
    
Note that $A \subset A^{\perp \perp}$ but there is no equality in general. The following properties can be found for example in  \cite[Prop 0.24, Lem. 13.14 and Rem. 13.15]{digne_representations_1991}.

 \begin{enumerate}[label=(\alph*)]
\item $Z(\G)/Z(\G)^{\circ}$ is isomorphic to the torsion group of $X/\Z \Phi^{\perp \perp}$.
\item $A^{\perp \perp}/A$ is the $p$-torsion subgroup of $X/A$.
\item If $(\G^*,\T^*)$ is dual to $(\G, \T)$, then for any $s \in \T^*$ the group of $C_{\G^*}(s)/C_{\G^*}(s)^\circ$ is isomorphic to a subgroup of $Z(\G)/Z(\G)^{\circ}$.
\item The exponent of $C_{\G^*}(s)/C_{\G^*}(s)^{\circ}$ divides the order of $s$.
\end{enumerate}
\smallskip

By (a), the group $Z(\G)/Z(\G)^{\circ}$ is a $2$-group if and only if $X /\Z \Phi ^{\perp \perp}$ has only $2$-torsion. This is equivalent to $X/\Z \Phi$ having only $2$ and $p$-torsion by remarking that  
  $$X /Z \Phi^{\perp \perp} \simeq (X/ \Z \Phi) / (\Z \Phi^{\perp \perp}/\Z \Phi)$$  
  and then applying (b) with $A=\Z \Phi$. The fact that $\Z \Phi_{\M}$ is a direct summand of $\Z \Phi$ implies that $X/\Z \Phi_{\M}$ has only $2$-torsion and $p$-torsion. Indeed, suppose $X/\Z \Phi_{\M}$ has $m$-torsion where $m$ is an integer prime to $2$ and $p$. Then there exists an element $\chi \in X$ such that $m \chi \in \Z \Phi_{\M}$, but  that implies that $\chi \in \Z \Phi$ because $X/\Z \Phi$ has only $2$ and $p$-torsion. The fact that $\chi \in \Z \Phi$ and $m \chi \in \Z \Phi_{\M}$  forces $\chi \in \Z \Phi_L$ since $\Z \Phi_{\M}$ is a direct summand of the free $\Z$-module $\Z \Phi$. Therefore $X /\Z \Phi_{\M}$ has no $m$-torsion. By (b), this is equivalent to $Z(\M)/Z(\M)^\circ$ being a $2$-group. This proves the assertion~(1). 

\smallskip
By (c), $C_{\G^*}(s)/C_{\G^*}(s)^{\circ}$ is isomorphic to a subgroup of $Z(\G)/Z(\G)^{\circ}$, hence is a $2$-group. But the exponent of this group divides the order of $s$ by (d) so $C_{\G^*}(s)/C_{\G^*}(s)^{\circ}$ is trivial and $C_{\G^*}(s)$ is connected. Finally, the connectedness and   \cite[Prop. 2.1]{geck_basic_1991} implies that $C_{\G^*}(s)$ is a rational Levi subgroup of $\G^*$ which gives (2). 
\end{proof}

An immediate consequence is:

\begin{Cor}\label{cor:levi}
If $\G$ has simple components of type $B$, $C$ or $D$ and $s$ is a semisimple element of odd order of $G$ then $C_{\G}(s)$ is a rational Levi subgroup of $\G$.
\end{Cor}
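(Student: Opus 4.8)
The plan is to obtain the corollary as an immediate consequence of part~(2) of the lemma above, applied to the dual group $\G^*$ rather than to $\G$ itself. Since $s$ lies in $G=\G^F$ it is in particular a semisimple element of the algebraic group $\G$, and $\G$ is canonically the dual of $\G^*$; so it suffices to verify that $\G^*$, together with $s$, meets the hypotheses of part~(2) of the lemma, namely that $Z(\G^*)/Z(\G^*)^\circ$ is a $2$-group and that the order of $s$ is divisible by good primes only.

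First I would treat the condition on the centre. Duality interchanges the types $B$ and $C$ and fixes type $D$, so $\G^*$ again has all its simple components of type $B$, $C$ or $D$; in each of these types the quotient of the weight lattice by the root lattice is a $2$-group (it is $\Z/2$ in types $B$ and $C$ and has order $4$ in type $D$), so the computation with character, root and weight lattices carried out in the proof of the lemma applies verbatim with $\G^*$ in place of $\G$ and shows that $Z(\G^*)/Z(\G^*)^\circ$ is a $2$-group. Next, for the condition on $s$: the only bad prime for a group all of whose simple components are of type $B$, $C$ or $D$ is $2$, so every odd prime is good; and since $s$ has odd order, every prime dividing the order of $s$ is good. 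Part~(2) of the lemma, applied to $\G^*$, then gives that $C_\G(s)=C_{(\G^*)^*}(s)$ is a rational Levi subgroup of $\G$, as claimed.

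This is essentially a bookkeeping argument, so I do not expect any genuine difficulty; the only point needing care is that the lemma is formulated for the centraliser of an element of the \emph{dual} group, so one has to pass to $\G^*$ and use $(\G^*)^*\cong\G$. (If one preferred to avoid duality, the same conclusion could be reached by rerunning the proof of the lemma directly: property~(c) embeds $C_\G(s)/C_\G(s)^\circ$ into $Z(\G^*)/Z(\G^*)^\circ$, property~(d) shows its exponent divides the odd integer $|s|$, so this quotient is trivial and $C_\G(s)$ is connected, whence \cite[Prop.~2.1]{geck_basic_1991} identifies $C_\G(s)$ with a rational Levi subgroup of $\G$.)
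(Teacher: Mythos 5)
Your proposal is correct and matches the paper's intent: the corollary is stated there as an immediate consequence of the lemma, and the intended argument is exactly the dualization you spell out (both $\G$ and $\G^*$ have simple components of type $B$, $C$ or $D$, so $Z(\G^*)/Z(\G^*)^\circ$ is a $2$-group, and since $2$ is the only bad prime an odd-order semisimple element has order divisible by good primes only, so part~(2) applied with the roles of $\G$ and $\G^*$ exchanged gives the claim). Your remark that one could instead rerun properties (c) and (d) directly is the same computation unpacked, so there is nothing to add.
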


%\vspace*{0.5cm}

Let $S_{2'}(G)$ be a set of representatives of $G$-conjugacy classes of semisimple elements of odd order of $G$. The result in \cite[Prop. 4.2]{geck_basic_1991} gives, under some conditions, a bijection between $S_{2'}(G)$ and $S_{2'}(G^*)$, which is one of the main ingredients to count irreducible Brauer characters in unipotent blocks. We need a slight modification of the statement to be able to get this bijection in case the center is not connected. If $s \in S_{2'}(G)$, we denote by $C_{\G}(s)^*$ a Levi subgroup of $\G^*$ dual to $C_{\G}(s)$ (we recall that by the lemma above and under the assumptions on $\G$, $C_{\G}(s)$ is a rational Levi subgroup of $\G$). 

\vspace*{0.3cm}

\begin{Pro}\label{sec:bijoddorder}
 Assume that $\G$ is a Levi subgroup of a simple group of type $B, C$ or $D$. Then there is a bijection
 $$S_{2'}(G) \to S_{2'}(G^*) \quad t \mapsto t' $$
 such that we have an isomorphism $C_{\G}(t)^* \simeq C_{\G^*}(t')$ defined over $\F_q$.
\end{Pro}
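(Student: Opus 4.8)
The plan is to follow the strategy of \cite[Prop. 4.2]{geck_basic_1991} and adapt it to the setting where the centre of the ambient simple group is disconnected, using Corollary \ref{cor:levi} as the crucial substitute for the connectedness hypothesis that is used in the original proof. First I would fix an embedding of $\G$ as a rational Levi subgroup of a simple group $\mathbf{H}$ of type $B$, $C$ or $D$, and dually realise $\G^*$ inside $\mathbf{H}^*$. The point of Corollary \ref{cor:levi} is that for a semisimple $t \in G$ of odd order, $C_{\G}(t)$ is a \emph{connected} rational reductive subgroup — in fact a rational Levi subgroup of $\G$ — so the delicate component-group issues that obstruct a naive duality statement in general simply do not arise here. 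This is exactly the hypothesis under which Geck's argument runs.

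The key steps, in order, are as follows. (i) Recall that duality gives, on the level of root data, an isomorphism exchanging the character and cocharacter lattices; a semisimple element $t \in \T^{*F} \subset G^*$ of odd order corresponds, via the usual identification of $\T^{*F}$ with $\Irr_K(\T^F)$ (and the fact that raising to a power prime to $\ell = 2$ is a bijection on the $2'$-part), to a linear character, and conjugacy classes match up compatibly with the $F$-action. Concretely, one sets up the bijection $S_{2'}(G) \to S_{2'}(G^*)$, $t \mapsto t'$ by transporting $G$-conjugacy classes of odd-order semisimple elements to $G^*$-conjugacy classes through the chosen duality between $(\G,\T)$ and $(\G^*,\T^*)$, exactly as in \cite[Prop. 4.2]{geck_basic_1991}. (ii) Verify that this assignment is well defined on conjugacy classes: two odd-order semisimple elements of $\T^{*F}$ are $G^*$-conjugate if and only if they are conjugate under $N_{\G^*}(\T^*)^F$, and dually on the $\G$-side; since $C_{\G}(t)$ is connected by Corollary \ref{cor:levi}, the relative Weyl group governing the $G$-class of $t$ matches the one governing the $G^*$-class of $t'$, so the map is injective, and by symmetry (applying the same reasoning with $\G$ and $\G^*$ interchanged, using that $\G^*$ is again a Levi of a simple group of the same type) it is a bijection. (iii) For the isomorphism of centralisers: $C_{\G}(t)$ is a rational Levi subgroup of $\G$, and under duality a rational Levi subgroup corresponds to a rational Levi subgroup; I would identify $C_{\G}(t)^*$ — a Levi of $\G^*$ dual to $C_{\G}(t)$ — with $C_{\G^*}(t')$ by tracking the subsystem of roots: the roots of $C_{\G}(t)$ are those $\alpha$ with $\alpha(t) = 1$, and dualising turns these into precisely the coroots $\alpha^\vee$ with $t'$ acting trivially, i.e.\ the root system of $C_{\G^*}(t')$ (which is connected, again by the lemma of the previous subsection applied to $\G^*$, since $Z(\G^*)/Z(\G^*)^\circ$ is a $2$-group as $\G^*$ is a Levi of a simple group of type $B$, $C$, $D$). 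One then checks the identification is $F$-equivariant, which gives the isomorphism defined over $\F_q$.

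I expect the main obstacle to be step (iii), the compatibility of the centraliser isomorphism with the rational structure: one must be careful that the dual Levi $C_{\G}(t)^*$ is not just abstractly isomorphic to $C_{\G^*}(t')$ but that the isomorphism respects the Frobenius actions, since only then does it descend to finite groups. This requires pinning down the cocycle data (the $F$-conjugacy class in the relevant Weyl group) on both sides and checking they agree — essentially a bookkeeping exercise in the formalism of $(\G,\T,F)$-triples, but one where the disconnectedness of $Z(\G)$ could in principle introduce a discrepancy. The resolution is that Corollary \ref{cor:levi} forces $C_\G(t)$ to be a Levi (not merely a pseudo-Levi), so no extra component-group twist appears, and Geck's original verification goes through verbatim. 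The remaining ingredients — the bijection of $2'$-semisimple classes with linear characters of the torus, and the order-preservation — are the standard facts already used in \cite{geck_basic_1991}, and I would simply cite them.
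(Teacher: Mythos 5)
Your overall route is the same as the paper's: adapt the proof of \cite[Prop.~4.2]{geck_basic_1991}, use Corollary \ref{cor:levi} to know that $C_{\G}(t)$ is a connected rational Levi subgroup, match the root subsystems of the two centralisers through the duality, and obtain the rationality of the isomorphism by tracking the element $w$ (and $w^*$) in the Weyl group that twists the Frobenius, concluding with the standard fact that dual Levi subgroups in corresponding rational classes are isomorphic over $\F_q$ (the paper cites \cite[\S 7.3]{lusztig_irreducible_1977}). Steps (ii) and (iii) of your plan, including the identification of the main subtlety as the $F$-equivariance of the centraliser comparison, are exactly what the paper does.

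However, there is one genuine gap, and it sits precisely at the point the proposition is designed to fix. You cannot get the input bijection by ``simply citing'' \cite[Prop.~4.2]{geck_basic_1991}: its hypotheses ($\ell$ good, conditions on the centre) fail here, since $\ell=2$ is bad for types $B$, $C$, $D$ and the centre may be disconnected. What is actually needed is an $F$-equivariant, $W$-equivariant bijection $\varphi\colon \T_{2'}\to\T^*_{2'}$ satisfying $\alpha(t)=1 \Leftrightarrow \alpha^{\vee}(\varphi(t))=1$ for all roots $\alpha$ --- this is exactly the property you invoke in step (iii) when you say ``dualising turns these into precisely the coroots trivial on $t'$,'' and it does not follow from the identification of $\T^{*F}$ with $\Irr_K(\T^F)$ (a statement about rational points of the finite torus, not a geometric, Weyl-equivariant map on all odd-order elements). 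The paper's resolution is a specific observation you omit: since $\T$ is a maximal torus of a simple group of type $B$, $C$ or $D$, the determinant of the Cartan matrix is a power of $2$ (equivalently, $X/\Z\Phi$ has only $2$- and $p$-torsion), so the \emph{proof} of \cite[Prop.~4.2]{geck_basic_1991} still produces $\varphi$ on $2'$-elements even though the stated hypotheses fail. Relatedly, your parenthetical ``raising to a power prime to $\ell=2$ is a bijection on the $2'$-part'' is not the correct mechanism (raising to an odd power is not bijective on odd-order elements in general); what makes the duality map bijective on $\T_{2'}$ is that its defect is a $2$-power. Without this type-specific argument, the existence of $\varphi$ with property (P) --- and hence your whole step (iii) --- is unsupported.
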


%\vspace*{0.3cm}

\begin{proof}
Let $\T$ be a maximally split torus of $\G$ contained in an $F$-stable  Borel subgroup $\mathbf{B}$. Let $\T^*$ be a dual torus. We denote by $\Delta$ the basis of the root system associated to the pair $(\T,\mathbf{B})$. The Weyl groups of $\T$ and $\T^*$ will be denoted by $W$ and $W^*$ respectively. From the pairing between characters of $\T$ and $\T^*$ we have a group isomorphism $W \simeq W^*$. We will write $w^*$ for the image of $w$ under this isomorphism. By assumption, the torus $\T$ is contained in a simple group of type $B$, $C$ or $D$, in particular the determinant of the Cartan matrix of this group is a power of 2 so the proof of Proposition 4.2 in \cite{geck_basic_1991} gives an $F$-equivariant bijective map which intertwines the action of $W$ and $W^*$:
  $$ \varphi: \quad \T_{2'} \to \T^*_{2'}  $$
with the following property: if $\alpha$ is a root, $\alpha^{\vee}$ the corresponding coroot and $t \in T_{2'}$ then
\begin{equation}\alpha(t)=0 \text{ if and only if } \alpha^{\vee}(\varphi(t))=0. \tag{P}\end{equation}
 
Let $s \in G$ be a semisimple element of odd order, then $\M:=C_{\G}(s)$ is a Levi subgroup of $\G$ by
\ref{cor:levi}. Therefore there exists $g \in \G$ such that $\M^g=\M_{\Delta_1}$ is the standard Levi subgroup corresponding to a subset $\Delta_1 \subset \Delta$. Up to multiplying $g$ by an element of $\M_{\Delta_1}$ on the right we can assume without loss of generality that  $n_w=g^{-1}F(g) \in N_{\G}(\T)$. We will denote by $w$ its image in $W$. If we set $t=s^g$, then $t \in T_{2'}$ and $C_{\G}(t)=\M^g = \M_{\Delta_1}$. Let $t'= \varphi (t) \in T^*_{2'}$. Let $g' \in \G^*$ such that $g'^{-1}F(g') \in N_{\G^*}(\T^*)$ and has image $w^*$ in $W^*$. Then
$$ F(t')=n_{w^*}^{-1}t'n_{w^*}$$
and $s'=g't'g'^{-1}$ is $F$-stable. We set $\M' = C_{\G^*}(s')$.  Let us recall the following properties:
\begin{itemize}
  \item $\varphi$ is $(\G, \G^*)$-equivariant under the action by conjugation (see the proof of Theorem 4.1 of \cite{geck_basic_1991});
  \item  $C_{\G}(s)$ is connected so every $F$-stable element conjugate to $s$ under $\G$ is conjugate to $s$ under $G$ (see for example \cite[Prop. 3.25]{digne_representations_1991}). Note that $C_{\G^*}(s')$ is also connected so the same hold for $s'$;
   \item Two elements of odd order of $T$ are conjugate under $G$ if and only if they are conjugate under $W$ (see \cite[0.12 (iv)]{digne_representations_1991}).
  \end{itemize}

 Consequently, the map $s \mapsto s'$ induces a map $\Phi: S_{2'}(G) \to S_{2'}(G^*)$. Let us prove the injectivity of $\Phi$: suppose that there are semisimple elements of odd order of $G$, say $s_1$ and $s_2$ such that $s_1':=\Phi(s_1)$ and $s_2':=\Phi(s_2)$ are conjugate under $G^*$. If we write $s_1'=g_1't_1'g_1'^{-1}$ and $s_2'=g_2't_2'g_2'^{-1}$ as above, then $t_1'$ and $t_2'$ are conjugate under $G$ so they are conjugate under $W^*$. Hence, because of the equivariance properties of $\varphi$, $t_1:=\varphi^{-1}(t_1')$ and $t_2:=\varphi^{-1}(t_2')$ are conjugate under $W$, hence $s_1$ and $s_2$ are conjugate under $G$. The surjectivity of $\varphi$ implies immediately the surjectivity of $\Phi$ so $\Phi$ is bijective.

\smallskip

  Finally we have to prove that there is an isomorphism between $\M$ and $\M'$ which is defined over $\F_q$. Given $\alpha$ a root, let $U_{\alpha}$ be the corresponding root subgroup of $G$. We have:
$$\begin{aligned}
    {}^g\M'=C_{\G}(t') & = \langle\T^*, \U_{\alpha^{\vee}}\, \mid \, \alpha^{\vee}(t')=1\rangle\\
    & =\langle\T^*, \U_{\alpha^{\vee}} \, \mid \, \alpha^{\vee}(\varphi^{-1}(t))=1\rangle\\
                   &=\langle\T^*, \U_{\alpha^{\vee}} \, \mid \, \alpha(t)=1\rangle & \\
    &=\M_{\Delta_1^{\vee}}
  \end{aligned}$$
  The third equality comes from (P). Since $g'^{-1}F(g')$ represents $w^*$, the $G$-conjugacy class of $\M$ corresponds naturally to the $G^*$-conjugacy class of $\M'$ and so by \cite[\S 7.3]{lusztig_irreducible_1977}, there is an isomorphism between $\M^*$ and $\M'$ defined over $\F_q$. 
\end{proof}

%\vspace*{0.5cm}

Let $s \in S_{2'}(G^*)$. To the Levi subgroup $\M^*_s:=C_{\G^*}(s)$  of $\G^*$ we attach $\M_s \subset \G$, a rational Levi subgroup of $\G$ dual to $\M^*_s$. In \cite[11.8]{bonnafe_categories_2003}, Bonnafe and Rouquier have shown the existence of a Morita equivalence between $B_1(L_s)$ and $B_s(G)$. In particular $ m_s(G)=m_1(L_s)$ and the decomposition matrices are the same. This means that we can reduce it to the case of a unipotent block of a Levi subgroup. Let us prove the main result of this section.

%\vspace*{0.3cm}

\begin{Pro}\label{sec:countingirrkb1}
Let $\G$ be a Levi subroup of a simple group of type $B,C$ or $D$ in odd characteristic. Then $m_1(G)$ is equal to the number of unipotent classes of $G$.
\end{Pro}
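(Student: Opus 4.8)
The plan is to reduce the statement, via the Jordan decomposition of characters and the Bonnafé--Rouquier Morita equivalence already invoked in the text, to a purely combinatorial count that Geck carried out in \cite{geck_basic_1994} for groups with connected centre, and then to check that passing from a group with connected centre to its quotient by a central torus (which is how a Levi subgroup of a classical group sits inside one with connected centre) changes neither side of the equality. More precisely, let $\widetilde{\G}$ be a regular embedding of $\G$, so that $\widetilde{\G}$ has connected centre, $\G \hookrightarrow \widetilde{\G}$, and $\widetilde{\G}/\G$ is a torus; dually $\widetilde{\G}^* \twoheadrightarrow \G^*$ has kernel a central torus. First I would record that since $\ell=2$ and the component group $Z(\G)/Z(\G)^\circ$ is a $2$-group (Lemma, part (1), applied to a Levi of a simple classical group), the relevant $\ell'$-indexing sets and unipotent block structure are controlled exactly as in the connected-centre case.

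Next I would run the counting argument itself. On the ordinary side, $\Irr_K(B_1(G))$ is obtained from $\Irr_K(B_1(\widetilde{G}))$ by restriction; restriction of unipotent characters along $\widetilde{G}\to G$ is multiplicity-free with each $\widetilde{G}$-character restricting irreducibly (the unipotent characters are trivial on the central torus), so the \emph{unipotent characters} of $G$ are in bijection with those of $\widetilde G$, hence with unipotent characters of $\G$ over $\overbar\F_p$, which are parametrised combinatorially and in the same way for $\G$ and $\widetilde\G$. The same must be shown for the Brauer side: $m_1(G)=|\Irr_k(B_1(G))|$. Here I would use Clifford theory for the abelian (indeed $\ell$-group, since $\widetilde G/G\,Z(\widetilde G)$ is a $2$-group) quotient $\widetilde G/G$ acting on $\Irr_k(B_1(\widetilde G))$: the number of irreducible Brauer characters of $B_1(G)$ equals the number of $\widetilde G/G$-orbits on $\Irr_k(B_1(\widetilde G))$ counted with the sizes of the relevant stabiliser-index contributions. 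The key point, exactly as in Geck's treatment, is that the unipotent Brauer characters of $\widetilde G$ are fixed by $\widetilde G/G$ because unipotent $K$-characters are and the decomposition map is equivariant — so $m_1(G)=m_1(\widetilde G)$.

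Then I would combine: by Geck \cite{geck_basic_1994}, $m_1(\widetilde G)$ equals the number of unipotent classes of $\widetilde G$ (here $\widetilde\G$ has simple classical components and connected centre, so his theorem applies, possibly after reducing the Levi $\G$ to a product of classical groups and tori and using multiplicativity of all quantities involved over direct products and over $F$-orbits of factors). Finally I would observe that the number of unipotent classes of $G=\G^F$ equals the number of unipotent classes of $\widetilde G=\widetilde\G^F$, because $\G$ and $\widetilde\G$ have isomorphic derived groups (a central torus carries no unipotent elements), so the unipotent varieties of $\G$ and $\widetilde\G$ are $F$-equivariantly isomorphic and $\G$-orbits coincide with $\widetilde\G$-orbits on it; counting $F$-stable orbits with the appropriate rational classes gives the same number. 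Stringing these equalities together yields $m_1(G)=\#\{\text{unipotent classes of }G\}$.

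The main obstacle I expect is the Brauer-side equivariance, i.e.\ justifying $m_1(G)=m_1(\widetilde G)$ rigorously: one needs that every irreducible Brauer character in $B_1(\widetilde G)$ is stable under $\widetilde G/G$ \emph{and} that the Clifford-theoretic passage $\widetilde G\to G$ does not split any such character (no genuine extension/induction multiplicities appear), which is where the hypotheses $\ell=2$ and $Z(\G)/Z(\G)^\circ$ a $2$-group are really used, together with a dimension/parity or central-character argument as in \cite{geck_basic_1994}. Everything else — the behaviour of unipotent ordinary characters and unipotent classes under a regular embedding, and the reduction of a Levi of a simple classical group to a product of classical groups and tori — is standard bookkeeping, though it has to be done carefully over $\F_q$ and over the factors permuted by $F$.
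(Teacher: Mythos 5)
Your reduction via a regular embedding $\G\hookrightarrow\widetilde{\G}$ fails at both of its key links, and the failure is not a repairable technicality: the two intermediate equalities you need are false for groups covered by the statement. Take $\G=\mathrm{Sp}_2=\mathrm{SL}_2$, which occurs (up to a central torus) as a Levi subgroup of $\mathrm{Sp}_{2n}$, with $p$ odd, $\ell=2$, and $\widetilde{\G}=\mathrm{GL}_2$. Then $m_1(\widetilde{G})=2$, equal to the number of unipotent classes of $\mathrm{GL}_2(q)$, whereas $m_1(G)=3$ (see Remark \ref{bad-examples}) and $\mathrm{SL}_2(q)$ has $3$ unipotent classes. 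So neither $m_1(G)=m_1(\widetilde{G})$ nor the equality of the numbers of unipotent classes of $G$ and $\widetilde{G}$ holds. On the Brauer side, stability of the unipotent Brauer characters of $\widetilde{G}$ under the relevant action does not prevent their restrictions to $G$ from splitting; this splitting is precisely the new phenomenon when $\ell=2$ divides $|(Z(\G)/Z(\G)^\circ)^F|$, i.e.\ exactly the situation the proposition is about, and your stated ``key point'' (no splitting) asserts it away. On the class side, although $\G$ and $\widetilde{\G}$ have the same unipotent variety and the same geometric orbits on it, the rational classes inside an $F$-stable class $C_u$ are parametrised by $F$-conjugacy classes of $A_{\G}(u)$ for $G$ but of $A_{\widetilde{\G}}(u)$ for $\widetilde{G}$, and $A_{\widetilde{\G}}(u)$ is in general a proper quotient of $A_{\G}(u)$ (for regular unipotents in $\mathrm{SL}_2$ it is trivial while $A_{\G}(u)\simeq \Z/2$). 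A proof along your lines would have to show that the splitting of Brauer characters under restriction matches the splitting of rational unipotent classes, which is essentially the whole content of the statement; asserting that neither side splits makes the argument collapse.

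For comparison, the paper does not use a regular embedding here at all. It argues by induction on $\dim\G$: by Corollary \ref{cor:levi} the centraliser of any odd-order semisimple element is a rational Levi subgroup; the Bonnaf\'e--Rouquier Morita equivalence gives $m_s(G)=m_1(L_s)$, which the induction hypothesis identifies with the number of unipotent classes of $L_s$ for non-central $s$; the number of $2$-regular classes of $G$, which equals $|\Irr_k(G)|=\sum_s m_s(G)$, is then computed a second time through the Jordan decomposition of $2$-regular elements, and the two expressions are matched term by term using the bijection $S_{2'}(G)\to S_{2'}(G^*)$ of Proposition \ref{sec:bijoddorder} together with the rationality of the isomorphism $C_{\G}(t)^*\simeq C_{\G^*}(t')$, leaving $|Z(G^*)_{2'}|\,m_1(G)=|Z(G)_{2'}|\cdot u(G)$ with $u(G)$ the number of unipotent classes, whence the claim. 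If you want to salvage a regular-embedding argument, you would have to carry out the Clifford-theoretic bookkeeping on both sides (as Denoncin does in type $A$), not bypass it.
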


\begin{proof}
  We will proceed by induction on the dimension: let $s \in S_{2'}(G^*)$ and $\M_s$ be a rational Levi subgroup of $\G$ dual to the Levi subgroup  $C_{\G^*}(s)$ of $\G^*$. We denote by $u(L_s)$ the number of unipotent classes of $L_s$. If $s$ in non-central, then $\dim \M_s <  \dim \G$, so
  $$u(L_s)=m_1(L_s)=m_s(G).$$
The first equality comes from the induction hypothesis, the second one from the Morita equivalence between $B_1(L_s)$ and $B_s(G)$ as said above. Let $a$ be the number of $2$-regular classes of $G$. Using the fact that $a$ is also the number of elements in $\Irr_k(G)$, we have
$$\begin{aligned}
    a & =\sum_{s \in S_{2'}(G^*)} m_s(G)\\
        &=\sum_{\underset{s\notin Z(G^*)}{s \in S_{2'}(G^*)}} u(L_s) + \sum \limits_{s\in Z(G^*)_{2'}} m_s(G)\\
    & =\sum_{\underset{s\notin Z(G^*)^F}{s \in S_{2'}(G^*)}} u(L_s)  + |Z(G^*)_{2'}| \cdot m_1(G)    
 \end{aligned}$$
The last equality comes from the fact that for any central element $s$  of odd order of $G^*$, tensoring by the linear character of $C_{\G^*}(s)$ attached to $s$ as in \cite[13.30]{digne_representations_1991} provides a natural isomorphism between $B_1(G)$ and $B_s(G)$, so $m_1(G)=m_s(G)$. By using the fact that $a$ is the number of $2$-regular classes of $G$ we also have:
$$\begin{aligned} a = &\, \sum_{\underset{s\notin Z(G)}{s \in S_{2'}(G)}} u(C_G(s)) + |Z(G)_{2'}| \cdot u(G)  \\
= & \, \sum_{\underset{s\notin Z(G)}{s \in S_{2'}(G)}} u(L_{s'}) + |Z(G)_{2'}| \cdot u(G)
\end{aligned}$$
where the second equality comes from the fact that the isomorphism $C_\G(s)^* \simeq C_{\G^*}(s')$ is defined over $\F_q$. 
\smallskip

To conclude it remains to show that $|Z(G)_{2'}|=|Z(G^*)_{2'}|$. This comes from the fact that the bijection  $S_{2'}(G^*) \simeq S_{2'}(G)$ of \ref{sec:bijoddorder} induces a bijection between central elements of odd order of $G$ and $G^*$.\end{proof}

\subsection{Existence of the basic set}

From now on, we assume that $\G$ is a simple group of type $B, C$ or $D$ in odd characteristic. Recall that our aim is to find a unitriangular basic set of characters for the unipotent blocks of $G$. The problem of finding a unitriangular basic set for the unipotent blocks reduces to finding $m_1(G)$ projective characters and ordinary characters which satisfy the assumptions of Proposition \ref{sec:existence-basic-set}. Moreover, according to Proposition \ref{sec:countingirrkb1}, $m_1(G)$ is the number of unipotent classes of $G$. We have an interesting candidate: Kawanaka constructed a family of projective characters parametrised by the unipotent classes of $G$. More precisely, let $C$ be an $F$-stable unipotent class of $\G$, let $u_1,\dots,u_r$ be representatives of the $G$-conjugacy classes of $C^F$. To each $u_i$, we can associate a projective character $\gamma_{u_i}$, called the generalized Gelfand Graev character (for more details on the construction see for example \cite{taylor_unipotent_2012}).

%\vspace*{0.5cm}
\smallskip

Following \cite{lusztig_unipotent_1992}, we can associate to $\rho \in$ Irr$_K(G)$ a unique $F$-stable unipotent class $C_{\rho}$ of $\G$ satisfying the two following conditions:

\begin{enumerate}
\item $\sum_{x\in C_{\rho}^F} \rho(x) \neq 0$,
\item $C_{\rho}$ is of maximal dimension for the condition above.
\end{enumerate}

We say that $C_\rho$ is the \emph{unipotent support} of $\rho$. By using the GGGRs, we can also associate to $\rho$ another $F$-stable unipotent class $C_{\rho}^*$ of  $\G$, the \emph{wave front set} of $\rho$, which is the unique unipotent class satisfying the following conditions:

\begin{enumerate}
\item $(\rho, \gamma_u)\neq 0$ for some $u \in C_{\rho}^{*F}$,
\item$(\rho, \gamma_v) \neq 0$ implies $\dim C_v \leq  \dim C_{\rho}^*$,
\end{enumerate}
where $C_v$ is the unipotent class of $\G$ containing $v$. Actually, the unipotent support and the wave front set are closely related. More precisely, let $D_G$ be the Alvis-Curtis duality for representations of $G$ (see the definition in \cite[8.8]{digne_representations_1991}). It is known that $D_G$ maps an irreducible character to an irreducible character up to a sign. If $\rho \in \Irr_K(G)$, we denote by $\rho^*$ the irreducible character of $G$ such that $\rho^*= \pm D_G(\rho)$. We call $\rho^*$ the dual character of $\rho$. The relation between unipotent support and wave front set comes from this duality:
$$\forall \rho \in \Irr_K(G) \quad C^*_{\rho}=C_{\rho^*}.$$
Note that all these properties were first proved under the assumption that $p$ and $q$ were large enough. These were later generalised to the case where $p$ is good prime in \cite{taylor_generalized_2016}.

\smallskip

Let us fix some notation: given $x \in G$, we denote by $A_{\G}(x)$ the component group of $C_{\G}(x)$. Since this group depends only on the conjugacy class of $x$ in $\G$, we will use the notation $A_C$ where $C$ is the conjugacy class of $x$. In order to use Proposition \ref{sec:existence-basic-set}, we need to find suitable ordinary characters. Those characters are provided by this result of Geck, H\'ezard and Taylor.

\begin{Pro}[Geck--Hézard, Taylor \cite{geck_unipotent_2008,taylor_unipotent_2013}] \label{sec:existence-basic-set-2}
  Let $C$ be an $F$-stable unipotent class of $\G$ and $u_1, \ldots, u_r$ be representatives of the $G$-conjugacy classes of $C^F$. Assume that $A_C$ is abelian. Then there exist an element $s$ and characters $\chi_1,\ldots,\chi_r \in \mathcal{E}(G, s)$ such that
  \begin{itemize}
  \item[$\mathrm{(i)}$] the image of $s$ in the adjoint quotient $\G^* / Z(\G^*)$ is quasi-isolated;
  \item[$\mathrm{(ii)}$] $\chi_1, \ldots, \chi_r$ have unipotent support $C$;
  \item[$\mathrm{(iii)}$] $\big(\langle\chi_i^*,\gamma_{u_j}\rangle_G\big)_{i,j}$ is the identity matrix.
  \end{itemize}
\end{Pro}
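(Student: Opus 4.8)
The plan is to reduce the statement, via Alvis--Curtis duality, to an assertion purely about wave front sets, and then to obtain it from Lusztig's theory of unipotent supports in the refined form due to Geck--H\'ezard and, in small good characteristic, Taylor; concretely one must exhibit $s$ together with a family of ``generalized regular characters''.

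First I would dualise. Since $D_G$ commutes with Deligne--Lusztig induction it preserves the rational Lusztig series, so $\{\rho^*:\rho\in\mathcal{E}(G,s)\}=\mathcal{E}(G,s)$; using the identity $C_\rho=C^*_{\rho^*}$ recalled above, it is therefore equivalent to produce a semisimple $s\in\G^*$ with quasi-isolated image in $\G^*/Z(\G^*)$ together with characters $\psi_1,\dots,\psi_r\in\mathcal{E}(G,s)$ such that each $\psi_i$ has wave front set $C$ and $\big(\langle\psi_i,\gamma_{u_j}\rangle_G\big)_{i,j}$ is the identity matrix; one then sets $\chi_i:=\psi_i^*$, so that (ii) and (iii) become two facets of a single statement about the pairing of wave-front-set characters with GGGRs, while (i) is unchanged.

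Next I would transport the problem to the centraliser $H:=C_{\G^*}(s)$. By Jordan decomposition $\mathcal{E}(G,s)$ is in bijection with the unipotent characters of $H^F$, and Lusztig's description of the wave front set --- established in \cite{lusztig_unipotent_1992} for $p$ and $q$ large and extended to all good primes $p$ by Taylor in \cite{taylor_generalized_2016} --- computes the wave front set of $\psi\in\mathcal{E}(G,s)$ from that of the corresponding unipotent character of $H^F$, via Lusztig--Spaltenstein induction of unipotent classes from $H$ to $\G^*$ followed by the canonical bijection between unipotent classes of $\G^*$ and of $\G$. The guiding idea is to arrange that the unipotent characters of $H^F$ in play are the \emph{regular} ones (constituents of a Gelfand--Graev character of $H^F$): their wave front set inside $H$ is the regular unipotent class of $H$, so inducing that class up to $\G^*$ and transporting to $\G$ should return exactly $C$. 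One is thereby reduced to choosing $s$ so that it is quasi-isolated modulo $Z(\G^*)$, so that Lusztig--Spaltenstein induction to $\G^*$ of the regular class of $H$ is the class dual to $C$, and so that $H^F$ has exactly $r$ unipotent regular characters (a condition on the centre of $H$), compatibly with Kawanaka's $A_C^F$-grading of the family $(\gamma_{u_j})_j$. Constructing such an $s$ is the combinatorial heart: it is carried out case by case using the classification of quasi-isolated semisimple classes in $\G^*$ (Bonnaf\'e) together with the Springer correspondence and the classification of unipotent classes, and it is precisely the hypothesis that $A_C$ be abelian that makes the two counts match.

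Finally, granting such an $s$ and taking the $\psi_i$ to be the Jordan-correspondents of the unipotent regular characters of $H^F$, ordered relative to the $u_j$, the matrix $\big(\langle\psi_i,\gamma_{u_j}\rangle\big)_{i,j}$ is triangular for dimension reasons --- a GGGR attached to a class of dimension larger than $\dim C$ is orthogonal to every $\psi_i$, and those of smaller dimension contribute nothing because $C$ is the wave front set of $\psi_i$ --- while its diagonal entries equal $1$ by the multiplicity-one behaviour of GGGRs on wave-front-set constituents (Lusztig, and Taylor in good characteristic) together with the fact that distinct $u_j$ single out distinct $\psi_i$ through the $A_C^F$-grading; a last reindexing turns this matrix into the identity, and then $\chi_i:=\psi_i^*$ satisfy (i), (ii) and (iii). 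I expect the main obstacle to be exactly the construction in the previous paragraph: showing that $s$ may be taken quasi-isolated, rather than an arbitrary semisimple element, forces the explicit case analysis of Geck--H\'ezard, and since the underlying results of Lusztig on GGGRs and wave front sets were originally available only for $p,q\gg 0$, one must --- for the odd good primes relevant to this paper --- first re-establish the whole apparatus in good characteristic following Taylor.
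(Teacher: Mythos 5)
First, a point of comparison: the paper contains no proof of this Proposition at all --- it is imported verbatim from Geck--H\'ezard and Taylor, so your sketch can only be measured against those papers. At the level of architecture your plan does resemble theirs: dualising with $D_G$ to trade unipotent supports for wave front sets, passing through Jordan decomposition to $H=C_{\G^*}(s)$, invoking Bonnaf\'e's classification of quasi-isolated elements for a case-by-case choice of $s$, and relying on Taylor's extension of the GGGR/wave-front machinery to all good $p$ (and on a regular-embedding/Clifford-theoretic reduction where the centre is disconnected, which is where the hypothesis that $A_C$ is abelian does its work).

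However, the pivot of your construction is wrong. You propose to take the Jordan correspondents of the \emph{regular} unipotent characters of $H^F$, whose wave front set in $H$ is the regular class, and to recover $C$ by ``inducing'' that class to $\G^*$. Lusztig's map attaches to the pair ($s$, regular class of $H$) the class of $\G$ obtained by $j$-inducing the Springer character $E_{u_{\mathrm{reg}},1}$ of $W_s$, i.e.\ the character of $b$-invariant $0$; $j$-induction sends it to the character of $W$ of $b$-invariant $0$, so one lands on the regular class of $\G$ \emph{for every} $s$. More crudely, there are only a handful of quasi-isolated classes in each type, each yielding a single ``induced-from-regular'' class, while the Proposition must produce characters for every $F$-stable class $C$; the characters must instead be taken in the family of $\mathcal{E}(G,s)$ that the map $\Phi$ ($j$-induction followed by the Springer correspondence) attaches to $C$, which is almost never the Steinberg/regular family of $H^F$. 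Relatedly, (iii) is not a consequence of any general ``multiplicity one on the wave front'' principle: knowing that $C$ is the wave front set of each $\psi_i$ says nothing about the individual integers $\langle\psi_i,\gamma_{u_j}\rangle$ for the several rational classes $u_j$ inside $C^F$. In Geck--H\'ezard and Taylor the diagonal $1$'s and off-diagonal $0$'s come from Lusztig's explicit formula expressing $\langle D_G(\rho),\gamma_{u}\rangle$ through the Fourier pairing on $\tilde{\mathcal{M}}(\Gamma_u)$, combined with a judicious, case-dependent choice of $s$ and of the labels $(x,\sigma)$ of the $\chi_i$ inside that family; without this input the matrix in (iii) cannot even be shown to separate the $u_j$, let alone be the identity.
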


%\vspace*{0.5cm}

We need some more notation: let $C_1, \ldots, C_r$ be the $F$-stable unipotent classes of $\G$ ordered such that $i<j$ whenever $\dim C_i < \dim C_j$. For each class $C_i$, let $u_{i,1},\ldots,u_{i,r_i}$ be representatives of the $G$-conjugacy classes of $C_i^F$. We denote by $\gamma_{i,u_i}$ the GGGR corresponding to $C_i$ and by $\chi_{i,u_i}$ the characters given by the proposition above. Now, we can state the main result.

\begin{The}\label{thm:main}
Assume that $p$ is odd and $\ell = 2$. Let $\G$ be a simple group of type $B$, $C$ or $D$, but not a spin or half-spin group. Then the set $(\chi_{i, u_j}^*)_{1 \leq i \leq n, 1 \leq j \leq r_i}$ form a unitriangular basic set of characters for the unipotent blocks of G.
\end{The}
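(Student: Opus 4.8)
The plan is to verify the hypotheses of Proposition \ref{sec:existence-basic-set}, namely to produce $m_1(G)$ ordinary characters and $m_1(G)$ projective characters in $B_1$ whose matrix of mutual scalar products is lower unitriangular after suitable reordering. By Proposition \ref{sec:countingirrkb1}, $m_1(G)$ equals the number of unipotent classes of $\G$, i.e.\ $\sum_i r_i$ in the notation above, so the counts match provided every $C_i$ can legitimately be fed into Proposition \ref{sec:existence-basic-set-2}; the first thing I would check is therefore that for $\G$ simple of type $B$, $C$ or $D$ other than spin/half-spin, every component group $A_{C_i}$ is abelian — this is classical (for type $B$, $C$ the component groups of unipotent classes are elementary abelian $2$-groups; for type $D$ one must exclude the very-even classes in $D_n$ appearing in the half-spin and spin cases, which is exactly the excluded family).

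Next I would take as ordinary characters the duals $\chi_{i,u_j}^*$ and as projective characters the GGGRs $\gamma_{i,u_j}$, and compute the matrix $M$ with entries $\langle \chi_{i,u_j}^*, \gamma_{i',u_{j'}}\rangle_G$, ordering the index set $(i,j)$ so that $i$ is weakly increasing (so that $\dim C_i$ is weakly increasing). The diagonal blocks $i=i'$ are the identity matrix by part (iii) of Proposition \ref{sec:existence-basic-set-2}. For the off-diagonal entries I would invoke the relationship between the unipotent support, the wave front set, and Alvis--Curtis duality recalled before the theorem: since $C^*_\rho = C_{\rho^*}$, the character $\chi_{i,u_j}$ has unipotent support $C_i$ (part (ii)), hence $\chi_{i,u_j}^*$ has wave front set $C_i$; and by the defining property of the wave front set, $\langle \chi_{i,u_j}^*, \gamma_v\rangle \neq 0$ forces $\dim C_v \le \dim C_i$. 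Thus $\langle \chi_{i,u_j}^*, \gamma_{i',u_{j'}}\rangle = 0$ whenever $\dim C_{i'} > \dim C_i$, i.e.\ the block above the diagonal vanishes, so $M$ is block lower triangular with identity diagonal blocks, hence lower unitriangular.

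A subtlety I would address carefully is that Proposition \ref{sec:existence-basic-set} requires all the characters to lie in $B_1$, whereas Proposition \ref{sec:existence-basic-set-2} produces $\chi_{i,u_j} \in \mathcal{E}(G,s)$ for some $s$ which is only quasi-isolated modulo the center, not necessarily $s = 1$. Here I would use that $\ell = 2$, $p$ odd and that $s$ is chosen quasi-isolated: the relevant $s$ in Proposition \ref{sec:existence-basic-set-2} can in fact be taken of $2$-power order (its image mod center is quasi-isolated, and in types $B$, $C$, $D$ quasi-isolated elements have order a power of $2$), so $\mathcal{E}(G,s) \subseteq B_1(G)$ by the Broué--Michel description (Theorem \ref{sec:BM-blocks}). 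The same holds after applying $D_G$ since Alvis--Curtis duality preserves blocks, so $\chi_{i,u_j}^* \in B_1$; and the GGGRs $\gamma_{i,u_j}$ are projective, and their images in $B_1$ (their projection onto the unipotent block) are the projective characters actually used — alternatively, since all scalar products $\langle \chi_{i,u_j}^*, \gamma_{i',u_{j'}}\rangle$ computed above are unaffected by projecting $\gamma$ to $B_1$ (the $\chi^*$ already lie in $B_1$), one may replace each $\gamma$ by its $B_1$-component without changing $M$.

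With these ingredients the theorem follows immediately from Proposition \ref{sec:existence-basic-set}: the characters $(\chi_{i,u_j}^*)$ form a unitriangular basic set for $B_1(G)$. I expect the main obstacle to be the bookkeeping around which unipotent classes have abelian component group — in particular pinning down precisely that the spin and half-spin groups of type $D$ are exactly the cases where a non-abelian (or problematic very-even) $A_C$ forces the exclusion, and checking that for all remaining classes Proposition \ref{sec:existence-basic-set-2} genuinely applies. A secondary point requiring care is the block-membership argument of the previous paragraph, i.e.\ confirming that the element $s$ furnished by Geck--Hézard and Taylor can be normalized to have odd-prime-to-$\ell$... rather $2$-power order so that $\mathcal{E}(G,s)$ sits inside the union of unipotent blocks $B_1$.
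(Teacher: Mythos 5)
Your proposal is correct and follows essentially the same route as the paper: count $m_1(G)$ via Proposition \ref{sec:countingirrkb1}, take the Geck--H\'ezard--Taylor characters (with $A_{C}$ abelian by Liebeck--Seitz for non-spin, non-half-spin classical groups) against the GGGRs, get identity diagonal blocks from Proposition \ref{sec:existence-basic-set-2}(iii) and vanishing above the diagonal from the unipotent support/wave front set duality, and place everything in $B_1$ because quasi-isolated elements in adjoint type $B$, $C$, $D$ are $2$-elements, so $s$ is a $2$-element and Brou\'e--Michel applies. Your extra remark about replacing each $\gamma_{i,u_j}$ by its $B_1$-component is a harmless refinement the paper leaves implicit.
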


%\vspace*{0.1cm}

\begin{proof}
 First, we need to show that the characters of the statement lie in a unipotent block. By definition of $B_1$, it is enough to show that they all belong to a Lusztig series associated to a $2$-element. By Proposition \ref{sec:existence-basic-set-2}, the characters $\chi_{i, u_j}$ all belongs to $\mathcal{E}(G,s)$ where the image $s'$ of $s$ in the adjoint quotient of $\G^*$ is a quasi-isolated element. But for adjoint groups of type $B$, $C$ or $D$, all quasi-isolated elements are $2$-elements (\cite{bonnafe_quasi-isolated_2005}). Hence, there is an integer $k$ such that $s^{2^k}$ belongs to $Z(\G^*)$ which is a $2$-group, so $s$ is a 2-element. This proves that  $\chi_{i,u_j}$ (and its Alvis-Curtis dual $\chi_{i,u_j}^*$, this is a direct consequence of \cite[12.8]{digne_representations_1991}) belongs to $B_1$.
 
 \smallskip
 
Now, we have $m_1$ ordinary and projective characters in $B_1$ so, according to Proposition \ref{sec:existence-basic-set}, we simply have to ensure that the matrix $(\chi_{i,u_j}^*, \gamma_{i',u_{j'}})_{G}$ is lower unitriangular. It is shown in \cite{liebeck_unipotent_2012} that if $C$ if a unipotent class of a simple group of type $B,C, D$ which is not a spin group nor a half-spin group, then $A_C$ is abelian. Therefore according to Proposition \ref{sec:existence-basic-set-2}, for  each class $C_i$, the scalar matrix $\big(\langle\chi_{i, u_j}, \gamma_{i, u_{j'}}\rangle_{G}\big)_{j,j'}$ is the identity matrix. It remains to check that $\langle\chi_{i, u_j}^*, \gamma_{i', u_{j'}}\rangle_{G}$ is $0$ whenever $i<i'$. This simply comes from the fact that $C_i$ is the unipotent support of $\chi_{i, u_j}$, so the wave front set of $\chi_{i, u_j}^*$.  
\end{proof}

\section{Counting Brauer characters}

From now on, we will assume that $\G$ is simple adjoint, $p$ is good for $\G$ and $\ell$ is different from $p$. It follows from the work of Lusztig that unipotent characters of finite reductive groups can be parametrised in term of special unipotent classes. The aim of this section is to introduce the notion of \textit{$\ell$-special} unipotent classes in order to count the number of irreducible Brauer characters lying in the unipotent blocks in terms of those classes. We believe that this should provide a natural parametrisation of the irreducible Brauer characters lying in unipotent blocks.

\subsection{On representations of Weyl groups and unipotent characters}

\subsubsection{Special representations and families}
We summarize briefly Lusztig's results on the classification of unipotent characters (see \cite[\S4]{lusztig_g._characters_1984} or \cite[\S11]{carter_finite_1985}). To $\phi \in \Irr_K(W)$ we can associate two polynomials with coefficients in $\Q$:
$$\begin{aligned}
P_{\phi}(X)= &\, \gamma_{\phi}X^{a_{\phi}}+ \cdots +\delta_{\phi}X^{b_{\phi}}, \quad a_\phi \leq b_\phi \\
\widetilde{P}_{\phi}(X)=& \,\widetilde{\gamma}_{\phi}X^{\widetilde{a}_{\phi}}+\cdots+\widetilde{\delta}_{\phi}X^{\widetilde{b}_{\phi}}, \quad \widetilde{a}_{\phi} \leq \widetilde{b}_{\phi}
\end{aligned}$$

The first polynomial $P_{\phi}$, the \emph{fake degree} of $\phi$, is defined by considering the multiplicities of $\phi$ in each component of a certain quotient of the graded algebra of the natural representation of $W$, whereas the polynomial $\widetilde{P}_{\phi}$, the \emph{generic degree of $\phi$}, is constructed by the theory of Hecke algebras. According to \cite[11.3.4]{carter_finite_1985},  $\widetilde{a}_{\phi} \leq a_{\phi}$ , we are interested in the case where we have an equality.

%\vspace*{0.3cm}

\begin{Def}[Lusztig]
 $\phi \in \Irr_K(W)$ is called \textit{special} if $a_\phi=\widetilde{a}_\phi$.
\end{Def}

The set $\Irr_K(W)$ can be partitioned into subsets called families and it turns out that each family of $\Irr_K(W)$ contains exactly one special character (see for example \cite[12.3]{carter_finite_1985}). Moreover, there also exists a partition of the set of unipotent characters of $G$ indexed by the $F$-stable families of $\Irr_K(W)$ (\cite[12.3]{carter_finite_1985}). From now on, we will use the term family, either for irreducible characters of $W$ or for unipotent characters of $G$. Given a $F$-stable family $\mathcal{F}$, we will use the notations $\mathcal{F}(G)$ or $\mathcal{F}(W)$ if we need to specify if we work with $G$ or $W$. To explain the parameterisation of unipotent characters by Lusztig, we will need a few notations:

\vspace*{0.3cm}

\begin{Def} If $\Gamma$ is a group on which $F$ acts, we denote by
    $\tilde{\Gamma}$ the semidirect product of $\Gamma$ with the
    cyclic group generated by $F$ where $F.x.F^{-1}=F(x)$ if
    $x \in \Gamma$.
    $\tilde{\mathcal{M}}(\Gamma)$ is the set whose
    elements are $\tilde{\Gamma}$-conjugacy classes of pairs $(x,\sigma)$ where
    $x \in \Gamma .F$ and
    $\sigma \in \Irr_K(C_{\tilde{\Gamma}}(\sigma))$.

\end{Def}

\begin{Rk}\label{sec:MGamma} The following observations will be useful later:
  \begin{itemize} 
  \item If $x:=a.F \in \Gamma.F$, the
    $\tilde{\Gamma}$-conjugates of $x$ consist of elements of the form
    $b.F$ where $b \in \Gamma$ is $F$-conjugate to $a$.
\item If $F$ acts trivially on $\Gamma$, $\tilde{\Gamma}$ is the direct product of $\Gamma$ with the cyclic group generated by $F$ and $\tilde{\mathcal{M}}(\Gamma)$ is in bijection with the set of $\Gamma$-conjugacy classes of pairs $(x, \sigma)$ where $x \in \Gamma$ and $\sigma \in \Irr_K(C_{\Gamma}(x))$.
\end{itemize}

\end{Rk}

To each $F$-stable family $\mathcal{F}$ we can associate a group $\mathcal{G}_{\mathcal{F}}$ on which $F$ acts (this is done case by case, see \cite[\S4]{lusztig_irreducible_1977}).  It has been proved by Lusztig that unipotent characters in the family $\mathcal{F}(G)$ are parametrized by $\tilde{\mathcal{M}}(\mathcal{G}_{\mathcal{F}})$ (see the tables in Chapter 13 of \cite{carter_finite_1985}). %{\bf[ C'est un peu plus complique en general mais il pour les groupes simples on peut toujours supposer que $F$ agit trivialement sur $\mathcal{G}_{\mathcal{F}}$ et ca marche]}

\subsubsection{Springer correspondence and special unipotent classes}

In \cite{springer_construction_1978}, Springer introduced a method to relate irreducible characters of $W$ and unipotent classes of $\G$. More precisely, to $u \in G$ a unipotent element and $\psi$ an irreducible character of $A(u)$, we can associate a $KW$-module $E_{u, \psi}$ which is either $0$ or irreducible, and $E_{u, \psi} \simeq E_{u', \psi'}$ if and only if $u$ is conjugate to $u'$ and $\psi=\psi'$. Springer showed that every irreducible representation of $W$ over $K$ is of the form $E_{u,\psi}$, and special representations have a specific form under this correspondence:

\begin{Pro}
  Any special irreducible representation of $W$ is of the form $E_{u,1}$ where $u$ is a unipotent element of $\G$.
\end{Pro}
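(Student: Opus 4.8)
The plan is to deduce this from a few classical properties of the Springer correspondence, combined with the uniqueness of the special character inside each family. Throughout, write $\mathcal{B}_u$ for the variety of Borel subgroups of $\G$ containing a unipotent element $u$, and recall that $\widetilde{a}_\phi\le a_\phi$ for every $\phi\in\Irr_K(W)$, with equality precisely when $\phi$ is special. The inputs I would invoke are: (i) $E_{u,1}\neq 0$, and its fake degree has valuation $a_{E_{u,1}}=\dim\mathcal{B}_u$; (ii) any Springer module $E_{u,\psi}$ attached to the same $u$ satisfies $a_{E_{u,\psi}}\ge\dim\mathcal{B}_u$; (iii) all the Springer modules attached to a fixed $u$ lie in one and the same family of $\Irr_K(W)$. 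Properties (i) and (ii) belong to Springer's construction (see \cite{springer_construction_1978} or \cite[\S11--13]{carter_finite_1985}), and (iii) follows from Lusztig's description of the families and their compatibility with the Springer correspondence \cite{lusztig_g._characters_1984}.

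Granting these, the argument goes as follows. Let $\phi$ be special and write $\phi=E_{u,\psi}$ by the Springer correspondence. Since $\phi$ is special, $a_\phi=\widetilde{a}_\phi$, and by (ii) this common value is at least $\dim\mathcal{B}_u$. By (iii) the module $E_{u,1}$ belongs to the family of $\phi$, and since $\widetilde{a}$ is constant on families we get $\widetilde{a}_{E_{u,1}}=\widetilde{a}_\phi=a_\phi\ge\dim\mathcal{B}_u$. On the other hand $\widetilde{a}_{E_{u,1}}\le a_{E_{u,1}}=\dim\mathcal{B}_u$ by (i) and the general inequality. Hence $\widetilde{a}_{E_{u,1}}=a_{E_{u,1}}$, so that $E_{u,1}$ is special too. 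But a family contains exactly one special character, so the two special characters $\phi$ and $E_{u,1}$ of the same family must coincide; thus $\phi=E_{u,1}$ and in particular $\psi=1$.

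I expect the substantive point to be input (iii) — and, more mundanely, the care needed to state (i) and (ii) correctly in whichever normalisation of the Springer correspondence is in force — since these are where one uses the structure theory rather than soft manipulations of $a$- and $\widetilde{a}$-invariants. A more computational alternative, avoiding (iii) altogether, is to appeal to Lusztig's explicit description of the special representations: in classical type they are read off from symbols, and in exceptional type from the tables of \cite[Ch.~13]{carter_finite_1985}, and in all cases they turn out to be exactly the $E_{u,1}$ with $u$ in a special unipotent class. This last point also shows that the $u$ in the statement may be taken to lie in a special class.
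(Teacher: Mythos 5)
Your argument hinges on input (iii) --- that for a fixed unipotent element $u$ all nonzero Springer modules $E_{u,\psi}$ lie in a single family of $\Irr_K(W)$ --- and this statement is false in general, so the proof as written has a genuine gap. A concrete counterexample already occurs in type $B_3$ (the adjoint group $\mathrm{SO}_7$): for $u$ with Jordan blocks $(3,3,1)$ one has $A_{\G}(u)\simeq \Z/2$, the module $E_{u,1}$ is the character labelled by the bipartition $(1;2)$, which is alone in its family, whereas $E_{u,\epsilon}$ is one of the two characters labelled $(21;-)$ or $(-;3)$, and both of these lie in the three-element family of $(2;1)$, the family attached to the class $(5,1,1)$. (This is consistent with the general picture: only those $\psi$ with $a_{\phi_{u,\psi}}=\dim\mathcal{B}_u$, i.e.\ those entering the canonical quotient of Definition \ref{sec:canonical-quotient}, produce characters in the family of $E_{u,1}$; the others may drop into families attached to larger special classes.) Without (iii) your chain of inequalities never gets started: from ``$\phi=E_{u,\psi}$ special'' you cannot transfer the value of $\widetilde{a}$ to $E_{u,1}$. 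Moreover, the weaker statement you actually need --- if $E_{u,\psi}$ is special then it lies in the family of $E_{u,1}$ --- is essentially a reformulation of the proposition itself (given injectivity of the Springer correspondence and uniqueness of the special character in a family), so it cannot be invoked as a known ``compatibility'' fact; it is precisely what has to be proved. Inputs (i), (ii), the constancy of $\widetilde{a}$ on families and the uniqueness of the special character are all fine; the failure is isolated in (iii), but it is fatal to this route.

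For comparison: the paper does not prove this proposition at all --- it is a theorem of Lusztig quoted as background. The legitimate justification is the one you relegate to a ``computational alternative'': Lusztig's explicit determination of the special representations and of the Springer correspondence (symbols in classical types, tables in exceptional types), from which one reads off that the special characters are exactly the $E_{u,1}$ for $u$ in a special class. If you want a non-tautological argument along your lines, you would need a genuine theorem in place of (iii), for instance the inequality $\widetilde{a}_{E_{u,\psi}}\le\dim\mathcal{B}_u$ together with Lusztig's description of which $E_{u,\psi}$ with $a_{\phi_{u,\psi}}=\dim\mathcal{B}_u$ fall in the family of $E_{u,1}$; both of these are substantial results proved by classification, not soft consequences of the $a$/$\widetilde{a}$ formalism.
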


The last proposition leads to the following definition:

\begin{Def}
Let $u$ be a unipotent element of $\G$. We say that $u$ is \textit{special} if $E_{u,1}$ is a special irreducible representation of $W$. In this case we will also say that the conjugacy class of $u$ is special. 
\end{Def}

%\vspace*{0.5cm}

Recall that each family contains exactly one special character, so there is a bijective correspondence between families of $\Irr_K(W)$ and special unipotent classes of $\G$. It would be natural to use those special unipotent classes to parametrise the unipotent characters of $G$. For that purpose Lusztig introduced the following finite group attached to the family.

\begin{Def}\label{sec:canonical-quotient}
  \item Let $u \in \G$ be a special unipotent element, and $b_u$ be
    the dimension of the variety of Borel subgroups of $\G$ containing
    $u$. Let $\mathcal{S} \subset \Irr_K(A_{\G}(u))$ be defined by
$$\mathcal{S}:= \{\psi \in \Irr_K(A_{\G}(u))| \quad a_{\phi_{u, \psi}}=b_u \}$$
The \textit{canonical quotient} $\Gamma_u$ is the smallest quotient of $A_{\G}(u)$ through which every element of $\mathcal{S}$ factors. In other words $$\Gamma_u := A_{\G}(u) / \bigcap_{\psi \in \mathcal{S}} \ker \psi.$$
\end{Def}

\begin{Rk}
   If $C_u$ is an $F$-stable class, $F$ acts naturally on $A_\G(u)$ so we can define $\tilde{\mathcal{M}}_{\ell}(\Gamma^\ell_u)$ and according to \cite[13.1.3]{lusztig_g._characters_1984}, $\tilde{\mathcal{M}}_{\ell}(\Gamma^\ell_u)$ only depend on the $\G$-conjugacy class of $u$.
   \end{Rk}

\vspace*{0.1cm}

Lusztig showed in \cite[13.1.3]{lusztig_g._characters_1984} that canonical quotients are exactly the finite groups which appear in the classification of unipotent characters:

 \begin{Pro}
 Let $u$ be a special unipotent element of $\G$,  and $\mathcal{F}$ be the family of $\Irr_K(W)$ containing the special character $ \phi_{u,1} \otimes \varepsilon $ where $\varepsilon$ is the sign character. Then, the triple  $\Gamma_u \subset \tilde{\Gamma_u} \supset \Gamma_u.F$ is isomorphic to $\mathcal{G}_{\mathcal{F}} \subset \tilde{\mathcal{G}}_{\mathcal{F}} \supset \mathcal{G}_{\mathcal{F}}$.
 \end{Pro}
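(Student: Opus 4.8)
This is Lusztig's theorem \cite[13.1.3]{lusztig_g._characters_1984}, and the plan is to reproduce its case-by-case verification. Both constructions in play --- the assignment $\mathcal{F}\mapsto\mathcal{G}_{\mathcal{F}}$ of a group to each $F$-stable family, and the assignment $u\mapsto\Gamma_u$ of the canonical quotient to each special unipotent class --- are compatible with direct product decompositions of the root datum: the partition of $\Irr_K(W)$ into families, the Springer correspondence, and the functions $\phi\mapsto a_\phi$ and $u\mapsto b_u$ are all multiplicative over the irreducible factors, and $F$ permutes these factors. So one first reduces to the case where $\G$ is quasi-simple, and then works type by type. In each type the two sides of the claimed isomorphism are extracted from explicit data: $\mathcal{G}_{\mathcal{F}}$ from Lusztig's classification of families (tabulated in \cite[\S13]{carter_finite_1985}), and the pair $(A_{\G}(u),\phi_{u,1})$ from the tables for the Springer correspondence. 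One then identifies the special class $C_u$ for which $\phi_{u,1}\otimes\varepsilon$ is the special character of $\mathcal{F}$, computes $\mathcal{S}=\{\psi : a_{\phi_{u,\psi}}=b_u\}$, forms $\Gamma_u=A_{\G}(u)/\bigcap_{\psi\in\mathcal{S}}\ker\psi$, and compares.

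For the exceptional types there are only finitely many special classes. In the large majority of cases $\mathcal{S}$ is all of $\Irr_K(A_{\G}(u))$, so $\Gamma_u=A_{\G}(u)$, and one just checks the group isomorphism $A_{\G}(u)\cong\mathcal{G}_{\mathcal{F}}$ --- each is one of $1,\Z/2\Z,S_3,S_4,S_5$. The instructive exceptions are the few classes in $E_7$ and $E_8$ (and their analogues in the classical types) for which $A_{\G}(u)$ is strictly larger than $\mathcal{G}_{\mathcal{F}}$; for these one must verify by direct computation of the $a$-invariants that the characters in $\mathcal{S}$ factor precisely through the predicted quotient. For the classical types $B_n$, $C_n$, $D_n$ one proceeds uniformly with Lusztig's symbols: $\mathcal{G}_{\mathcal{F}}$ is the elementary abelian $2$-group whose rank is read off from the defect of the symbol attached to $\mathcal{F}$; $A_{\G}(u)$ is an elementary abelian $2$-group indexed by the distinguished parts of the partition of $u$; and the combinatorial recipe for $a_{\phi_{u,\psi}}$ shows exactly which of those components survive in $\Gamma_u$. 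Matching the two presentations gives the isomorphism of abelian groups.

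It remains to match the $F$-actions. When $C_u$ is $F$-stable, $F$ acts on $A_{\G}(u)$ and hence on the quotient $\Gamma_u$; on the other side $F$ acts on $\mathcal{G}_{\mathcal{F}}$ through its permutation of the members of the family $\mathcal{F}$. For split groups both actions are trivial and there is nothing to check. The only genuinely twisted cases are ${}^2D_n$, ${}^3D_4$ and ${}^2E_6$, where one verifies that the permutation of $\Irr_K(A_{\G}(u))$ induced by the graph automorphism agrees with the permutation of the corresponding characters inside $\mathcal{F}$, so that the semidirect products $\tilde{\Gamma}_u=\Gamma_u\rtimes\langle F\rangle$ and $\tilde{\mathcal{G}}_{\mathcal{F}}$ are isomorphic, compatibly with the distinguished cosets $\Gamma_u.F$ and $\mathcal{G}_{\mathcal{F}}.F$.

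The main obstacle is the bookkeeping in the classical types: one has to align two a priori different combinatorial models --- symbols parametrising families on one side, partitions with signs parametrising unipotent classes and their component groups on the other --- and in particular confirm that it is the canonical quotient $\Gamma_u$, and not the full group $A_{\G}(u)$, that is isomorphic to $\mathcal{G}_{\mathcal{F}}$. Computing the $a$-invariants $a_{\phi_{u,\psi}}$ that cut out $\mathcal{S}$ is the technical heart of the argument; once that is in place, everything else is a finite check against the tables.
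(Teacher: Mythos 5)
The paper offers no independent proof of this proposition: it is quoted directly from Lusztig \cite[13.1.3]{lusztig_g._characters_1984}, whose argument is exactly the case-by-case verification you outline (reduction to quasi-simple type, matching symbols against partitions and $a$-invariants in the classical types, explicit tables for the exceptional types, and a check of the $F$-action in the twisted cases). Your proposal is therefore correct and takes essentially the same route as the source the paper relies on.
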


% \vspace*{0.5cm}

Hence, there is a parametrisation of unipotent characters in term of special unipotent classes:

 \begin{The}\label{sec:param-unip}
The unipotent characters of $G$ are parametrised by pairs $(u,x)$ where $u \in G$ is a special unipotent element up to $\G$-conjugacy and $x \in \tilde{\mathcal{M}}(\Gamma_u)$.
\end{The}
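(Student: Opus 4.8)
The plan is to assemble the statement from the three structural facts already established in this subsection, which together give a chain of bijections. First I would recall that Lusztig's classification (stated just above) partitions the unipotent characters of $G$ according to the $F$-stable families $\mathcal{F}$ of $\Irr_K(W)$, and that within the block indexed by $\mathcal{F}$ the unipotent characters are in bijection with $\tilde{\mathcal{M}}(\mathcal{G}_{\mathcal{F}})$. So it suffices to re-index this partition by special unipotent classes rather than by families, and to identify the group $\mathcal{G}_{\mathcal{F}}$ with a group attached to such a class.

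Next I would invoke the Springer correspondence together with the proposition that every special irreducible representation of $W$ is of the form $E_{u,1}$, and the fact (recalled in the excerpt) that each family contains exactly one special character. Concretely, given an $F$-stable family $\mathcal{F}$, let $\phi_{\mathcal{F}}$ be its unique special character; then $\phi_{\mathcal{F}} \otimes \varepsilon$ is also special (twisting by sign permutes special characters), so $\phi_{\mathcal{F}}\otimes\varepsilon = E_{u,1}$ for a unique special unipotent class, call its representative $u$. This $u\mapsto\mathcal{F}$ (or rather $\mathcal{F}\mapsto$ class of $u$) is the announced bijection between $F$-stable families and $F$-stable special unipotent classes; one needs to note $F$-stability is preserved because $F$ commutes with the Springer correspondence and with tensoring by $\varepsilon$. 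Finally I would apply the proposition identifying $\Gamma_u \subset \tilde{\Gamma}_u \supset \Gamma_u.F$ with $\mathcal{G}_{\mathcal{F}} \subset \tilde{\mathcal{G}}_{\mathcal{F}} \supset \mathcal{G}_{\mathcal{F}}$, which immediately yields $\tilde{\mathcal{M}}(\Gamma_u) = \tilde{\mathcal{M}}(\mathcal{G}_{\mathcal{F}})$ (the set $\tilde{\mathcal{M}}(-)$ depends only on this triple up to isomorphism). Combining: unipotent characters of $G$ $\leftrightarrow$ pairs $(\mathcal{F}, y)$ with $y\in\tilde{\mathcal{M}}(\mathcal{G}_{\mathcal{F}})$ $\leftrightarrow$ pairs $(u,x)$ with $u$ a special unipotent element up to $\G$-conjugacy and $x\in\tilde{\mathcal{M}}(\Gamma_u)$.

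The main subtlety — which is really the only point requiring care rather than a main obstacle — is keeping the bookkeeping of the sign twist $\phi\mapsto\phi\otimes\varepsilon$ consistent: Lusztig's parametrisation attaches $\mathcal{G}_{\mathcal{F}}$ to the family $\mathcal{F}$, while the canonical quotient $\Gamma_u$ is attached via the condition $a_{\phi_{u,\psi}}=b_u$ to the class of $u$, and the proposition above aligns these precisely when $\phi_{u,1}\otimes\varepsilon$ is the special character of $\mathcal{F}$. Once that matching is pinned down, the identification $\tilde{\mathcal{M}}(\Gamma_u)\simeq\tilde{\mathcal{M}}(\mathcal{G}_{\mathcal{F}})$ is formal from the isomorphism of triples, and the final statement follows by concatenating the bijections. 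No new computation is needed; the proof is essentially a dictionary translation of results already quoted.
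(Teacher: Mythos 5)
Your argument is correct and is essentially the paper's own (implicit) derivation: the theorem is stated there as a direct consequence of combining Lusztig's parametrisation of each $F$-stable family by $\tilde{\mathcal{M}}(\mathcal{G}_{\mathcal{F}})$, the bijection between families and special unipotent classes furnished by the Springer correspondence, and the identification of the triple $\Gamma_u \subset \tilde{\Gamma}_u \supset \Gamma_u.F$ with $\mathcal{G}_{\mathcal{F}} \subset \tilde{\mathcal{G}}_{\mathcal{F}} \supset \mathcal{G}_{\mathcal{F}}.F$. Your bookkeeping of the $\varepsilon$-twist (so that $\mathcal{F}$ is exactly the family containing $\phi_{u,1}\otimes\varepsilon$, as required by the cited proposition) is the right way to pin down the correspondence.
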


\subsection{Another way of counting irreducible Brauer characters of unipotent blocks}

\subsubsection{$\ell$-special classes}
Can we adapt the result of the previous section to the positive characteristic framework? More precisely, can we count the number of irreducible modular characters of $B_1$ by using information coming from unipotent classes? When $\ell$ is good, the fact that unipotent characters of $G$ form a basic set for the unipotent blocks together with Corollary \ref{sec:param-unip} give a positive answer to this question. When $\ell$ is bad, it is not true anymore that unipotent characters form a basic set as seen in Remark \ref{bad-examples}. So we need to adapt some definitions from the previous section. For that purpose, recall that  given $W'$ a reflection subgroup of $W$, there exists a map
$$j : \Irr(W') \longrightarrow \Irr(W)$$
called the \textit{$j$-induction} with the following property: if $W'$ is a parabolic subgroup, $j$ maps any special character of $W'$ to a special character of $W$. By \cite[13.3]{lusztig_g._characters_1984}, the $j$-induction is involved in the construction of  a map defined by Lusztig sending some special conjugacy classes of $\G^*$ onto unipotent classes of $\G$. We say that theconjugacy class $C_g$ of $g$ in $\G^*$ is special if $g$ has Jordan decomposition $g=su$ and $u$ is special in $C_{\G^*}(s)$ (notice that $C_{\G^*}(s)$ is connected since $\G$ is adjoint). %{\bf[Attention tu as utilise $C_v$ et pas $(v)$ auparavant pour les classes de conjugaison]}.

\smallskip

In addition, in \cite[13.3]{lusztig_g._characters_1984}, Lusztig introduced a map $\Phi$ between special classes of $\G^*$ and unipotent classes of $\G$ defined as follows. Let $g$ be a special element of $\G^*$ with Jordan decomposition $g=su$. The Springer correspondence affords a special irreducible representation $E_{u,1}$ of the Weyl group $W_s^*$ of $C_{\G}^*(s)$. Using the $j$-operation and the natural isomorphism $W^* \simeq W$, we get an irreducible representation of $W$. Finally, the Springer correspondence provides us a unipotent element $v$ of $\G$, such that $E=E_{v,1}$. Then $\Phi$ sends the $\G$-conjugacy class of $g$ to the $\G$-conjugacy class of $v$. 

\smallskip

We have everything we need to define an ``$\ell$-modular version'' of special unipotent classes:

%\vspace*{0.3cm}

\begin{Def}
A unipotent element $u$  of $\G$ is \textit{$\ell$-special} if there is a special element $g \in \G^*$ such that $\Phi$ associates the class of $g$ to the class of $u$ and the semisimple part of $g$ is an isolated $\ell$-element of $\G^*$.
\end{Def}

%\vspace*{0.3cm}

\begin{Rk} The following properties can be readily deduced from the definition:
  \begin{itemize}
  \item[(i)] Every special element is the image of a special class of $\G^*$, therefore every special element is $\ell$-special.
  \item[(ii)] If $\ell$ is good, then there is no non-trivial isolated semi simple $\ell$-element, so the $\ell$-special unipotent elements are exactly the special ones.
  \item[(iii)] When $\G$ is of type $B$, $C$ or $D$, the prime number $\ell = 2$ is the only bad prime so any unipotent element  is $2$-special.
  \end{itemize}
\end{Rk}

\vspace*{0.2cm}

In the characteristic $0$ case, to count unipotent characters, we associate to each $F$-stable special class $C_u$ of $\G$, the canonical quotient $\Gamma_u$. In the positive characteristic case, this group appears to be too small to count the irreducible modular characters of the unipotent blocks. We shall instead consider the following generalisation.

%\vspace*{0.3cm}

\begin{Def}\label{sec:ell-special-quo}
  \leavevmode
  \begin{enumerate}[label=\arabic*)]
  \item Let $u$ be an $\ell$-special unipotent element of $\G$. If $\Psi$ is a projective character of $A_{\G}(u)$ we define $a_{\Psi}$ as:
    $$a_{\Psi}:=\min a_{\phi_{u, \psi}}$$
    where $\psi$ runs over the irreducible components of $\Psi$ such that $\phi_{u, \psi} \neq 0$. Let $\mathcal{S}_{\ell}$ be the set of indecomposable projectives $\Psi$ such that $a_{\Psi}$ is maximal. Then the \textit{$\ell$-special quotient}  $\Gamma^\ell_u$ is the smallest quotient of $A_\G(u)$ such that every $\Psi \in \mathcal{S}_{\ell}$ factors through $\Gamma^\ell_u$.
    \item Let $\Gamma$ be a finite group on which $F$ acts. We denote by $\tilde{\Gamma}$ the semidirect product of $\Gamma$ with the infinite cyclic group generated by $F$ such that $FxF^{-1}=F(x)$ if $x \in \Gamma$. $\tilde{\mathcal{M}}_{\ell}(\Gamma)$ is the set of $\tilde{\Gamma}$-conjugacy class of pairs $(x, \phi)$ where $x$ is an element of $\Gamma.F$ and $\phi \in \Irr_k(C_{\Gamma}(x))$.
    \end{enumerate}
    \end{Def}

\begin{Rk}\label{rmk:char0}
  Unlike the characteristic zero case, it is important to consider representations over $k$ (a field of characteristic $\ell$) to define the set $\mathcal{M}_\ell$ when $\ell$ is a bad prime number. However, when $\ell$ is good and does not divide $|Z(\G)/Z(\G)^\circ|$, the groups $A_\G(u)$ are $\ell'$-groups so that in this case $\Gamma^\ell_u = \Gamma_u$ and $\tilde{\mathcal{M}}_{\ell}(\Gamma) \simeq \tilde{\mathcal{M}}(\Gamma)$.

\end{Rk}

\vspace*{0.3cm}

If $C_u$ is an $\ell$-special $F$-stable unipotent class, we denote by $\alpha_{\ell,u}$ the cardinal of $\tilde{\mathcal{M}}_\ell(\Gamma^{\ell}_u)$. Let $\alpha_\ell$ (or $\alpha_{\ell}(G)$ if there is an ambiguity in the underlying group) be the number of pairs $(u, x)$ where $u$ is an $\ell$-special unipotent element of $G$ up to $\G$-conjugacy and $x \in \tilde{\mathcal{M}}_\ell(\Gamma^{\ell}_u)$.  We conjecture the following result:

\begin{Con}
  Suppose that $p$ is good for $\G$. Then, $\alpha_\ell$ is the number of unipotent irreducible Brauer characters of $G$.
\end{Con}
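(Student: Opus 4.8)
The plan is to reduce the conjecture to a computation over Levi subgroups that parallels the structure of the proof of Proposition \ref{sec:countingirrkb1}, using the Bonnafé--Rouquier Morita equivalences to relate the blocks $B_s(G)$ to unipotent blocks of smaller groups. Recall that $\Irr_k(G)=\bigsqcup_s \Irr_k(B_s)$ where $s$ runs over $S_{\ell'}(G^*)$, and that $\Irr_k(G)$ has cardinality equal to the number of $\ell$-regular classes of $G$. So the first step is to write the number of $\ell$-regular classes two ways: once by summing the quantities $m_s(G)=|\Irr_k(B_s)|$, and once — via a bijection $S_{\ell'}(G^*)\simeq S_{\ell'}(G)$ analogous to Proposition \ref{sec:bijoddorder} but valid for adjoint groups with good $p$ — as a sum over semisimple $\ell'$-elements $s$ of $G$ of the number of $\ell$-regular unipotent classes of $C_\G(s)^F$, or rather of a suitable invariant of $C_\G(s)$. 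Since $\G$ is adjoint, $C_{\G^*}(s)$ is always connected, so the Bonnafé--Rouquier equivalence gives $m_s(G)=m_1(L_s)$ where $L_s$ is a reductive group dual to $C_{\G^*}(s)$ (an isolated subgroup when $s$ is isolated). This reduces the conjecture to the isolated case, and then an induction on $\dim\G$ handles the non-isolated contributions, exactly as in Proposition \ref{sec:countingirrkb1}.

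The second step is the heart of the matter: one must check the conjectural identity $\alpha_\ell(G)=m_1(G)$ directly when $G$ is the fixed points of an adjoint group $\G$ with $s$ isolated. Here the definition of $\alpha_\ell$ must be matched against a known count of $\Irr_k(B_1)$. The natural route is to pair the $\ell$-special classes with the Lusztig-series decomposition on the $\ell$-modular side: an $\ell$-special unipotent element $u$ of $\G$ arises, by definition, as $\Phi(su)$ for some isolated $\ell$-element $s\in\G^*$ and some special unipotent $u$ of $C_{\G^*}(s)$, and the set $\tilde{\mathcal M}_\ell(\Gamma^\ell_u)$ is meant to enumerate the Brauer characters of the corresponding piece. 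One should organise the sum $\alpha_\ell=\sum_u \alpha_{\ell,u}$ by grouping $\ell$-special $u$ according to which isolated $\ell$-element $s$ they come from; the inner sum over the special classes of $C_{\G^*}(s)$ of the quantities $|\tilde{\mathcal M}(\Gamma_{\cdot})|$ recovers, via Theorem \ref{sec:param-unip} applied inside $C_{\G^*}(s)$ (equivalently inside the dual $L_s$), the number of unipotent \emph{ordinary} characters of $L_s$ when $\ell$ is good — and one needs the analogous statement with $\tilde{\mathcal M}_\ell$ and $\Gamma^\ell$ giving $m_1(L_s)$ when $\ell$ is bad. So the conjecture is essentially equivalent to the single assertion that, for an adjoint simple group $\G$ with good $p$, the number $\sum_{u \text{ special}} |\tilde{\mathcal M}_\ell(\Gamma^\ell_u)|$ equals $m_1(G)$.

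The main obstacle is precisely this last equality, which is not formal: unlike the characteristic-zero parametrisation of Lusztig (Theorem \ref{sec:param-unip}), there is no a priori reason the combinatorics of the $\ell$-special quotients $\Gamma^\ell_u$ and the sets $\tilde{\mathcal M}_\ell$ should reproduce $|\Irr_k(B_1)|$. In good characteristic Remark \ref{rmk:char0} collapses everything to the ordinary case and there is nothing to prove beyond Geck--Hiss; the real content is when $\ell$ is bad, i.e. $\ell=2$ for classical types and $\ell\in\{2,3,5\}$ for exceptional types. For classical types one can fall back on Proposition \ref{sec:countingirrkb1}, which says $m_1(G)$ is the number of unipotent classes, and then it suffices to show $\alpha_2(G)$ also counts unipotent classes — using property (iii) of the last Remark, that every unipotent element is $2$-special, this becomes a check that $\sum_u|\tilde{\mathcal M}_2(\Gamma^2_u)|$ telescopes to the unipotent class count, which is a combinatorial statement about the groups $A_\G(u)$ and their $2$-modular representation theory that one verifies case by case. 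For the exceptional adjoint groups the paper only claims verification, not proof, so one proceeds by explicit computation of the $\ell$-special classes, the quotients $\Gamma^\ell_u$, the $\ell$-modular projective covers appearing in $\mathcal S_\ell$, and the cardinalities $|\tilde{\mathcal M}_\ell(\Gamma^\ell_u)|$, comparing the resulting $\alpha_\ell$ against the known values of $m_1$ obtained from the decomposition-matrix literature; this is what the tables in the final section record. I would therefore present the proof as: (1) the Morita/induction reduction to the isolated case, (2) the reformulation as $\sum_{u}|\tilde{\mathcal M}_\ell(\Gamma^\ell_u)|=m_1(G)$ for $\G$ adjoint simple, (3) the good-$\ell$ case via Remark \ref{rmk:char0} and Geck--Hiss, (4) the classical bad case via Proposition \ref{sec:countingirrkb1}, and (5) the case-by-case verification for exceptional types, flagging (4) and (5) as the computationally delicate parts and (2) as where the conjectural input really resides.
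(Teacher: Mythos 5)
You should first note that the statement you were asked to prove is a \emph{conjecture}: the paper offers no general proof, only a verification that $\alpha_\ell=m_1$ in three families (classical types $B$, $C$, $D$; exceptional adjoint groups; $\mathrm{SL}_n(\overline{\F}_p)$), compiled in the theorem closing Section 3. Your outline correctly recognises that the core identity is not formal and locates the conjectural content in the equality $\sum_u|\tilde{\mathcal M}_\ell(\Gamma^\ell_u)|=m_1(G)$, and your steps (3) and (5) do match the paper: the good-$\ell$ case collapses via Remark \ref{rmk:char0} to Geck--Hiss, and the exceptional case is a CHEVIE computation checked against the Geck--Hiss table of $m_1$. But for the classical types you replace the paper's uniform argument by an unspecified ``case-by-case combinatorial check'': in the paper one observes that for $\ell=2$ and type $B$, $C$, $D$ every $A_\G(u)$ is a $2$-group, so the only indecomposable projective is the regular module, whence $\Gamma^2_u=A_\G(u)$, and $|\tilde{\mathcal M}_2(\Gamma^2_u)|$ is the number of $F$-conjugacy classes of $A_\G(u)$, i.e.\ the number of $G$-classes in $C_u^F$; summing over all classes gives the number of unipotent $G$-classes, which equals $m_1(G)$ by Proposition \ref{sec:countingirrkb1}. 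You also omit the type-$A$ verification entirely, which in the paper requires computing $\Gamma^\ell_{u_\lambda}$ (a cyclic $\ell$-group of order $(m_\lambda)_\ell$) and matching $\alpha_{\ell,u_\lambda}$ with the number of constituents of the restriction of Denoncin's basic-set characters from $\mathrm{GL}_n$/$\mathrm{GU}_n$.

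The genuinely problematic part is your steps (1)--(2). You propose to decompose $\alpha_\ell$ by ``grouping the $\ell$-special classes according to which isolated $\ell$-element $s$ they come from'' and to identify each group's contribution with $m_1(L_s)$ via Bonnafé--Rouquier. This does not work as stated. First, Lusztig's map $\Phi$ is not injective: the same $\ell$-special class $u$ can arise from several isolated elements (including $s=1$, since every special class is $\ell$-special), yet it is counted once in $\alpha_\ell$ with the single intrinsic weight $|\tilde{\mathcal M}_\ell(\Gamma^\ell_u)|$ defined from the projective $kA_\G(u)$-modules of Definition \ref{sec:ell-special-quo}; there is no canonical way to split this weight into per-$s$ contributions. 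Second, the Bonnafé--Rouquier equivalence $B_s(G)\simeq B_1(L_s)$ is only available for $s$ of order prime to $\ell$, whereas the semisimple parts relevant here are isolated \emph{$\ell$-elements}, whose Lusztig series lie inside $B_1(G)$ itself by Theorem \ref{sec:BM-blocks}; so there is no block-theoretic decomposition of $B_1(G)$ indexed by isolated $\ell$-elements to reduce to. Third, the induction you sketch would require the conjecture (and the very definitions of $\ell$-special classes and $\Gamma^\ell_u$) for the groups $C_{\G^*}(s)$ and their duals, which are in general neither simple nor adjoint, while the paper only sets up these notions for $\G$ simple adjoint. So your reduction is at best heuristic; the honest status of the statement is exactly what the paper records, namely case-by-case verification.
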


 By Remark \ref{rmk:char0}, this holds when $\ell$ is good and does not divide $|Z(\G)/Z(\G)^\circ|$ since in this case the unipotent characters form a basic set. In the case of bad characteristic, the value of $m_1$ was computed in \cite{geck_modular_1997} for adjoint simple groups, so we simply need to compute $\alpha_\ell$ to check that our conjecture holds. To be able to do this, we need to know how to detect $\ell$-special classes of a group and for each such class, how to compute $\ell$-special quotients.

 \subsection{Computing $\alpha_\ell$}

 Let us begin by the following observation: according to \ref{sec:MGamma}, if $u$ is an $\ell$-special unipotent element of $G$ such that $F$ acts trivially on $A_G(u)$, to compute $\alpha_{\ell, u}$, it is enough to count the number of conjugacy classes of pairs $(x, \sigma)$ where $x \in \Gamma^{\ell}_u$ and $\sigma \in \Irr_k(C_{\Gamma}(x))$. Moreover, by \cite[2.4]{taylor_unipotent_2013}, for adjoint exceptionnal groups, every $F$-stable unipotent class has a representant $u$ such that $F$ acts trivially on $A_G(u)$. That observation will be used later.

 \vspace{0.3cm}
 
    \subsubsection{Classical type} %{\bf[Ca devrait aussi marcher pour spin et half spin mais laissons ca comme ca pour l'instant]}
    Suppose $\ell=2$ and $\G$ is simple of type $B$, $C$ or $D$. Then for every unipotent element $u$ of $G$, $u$ is $2$-special, $A_\G(u)$ is a $2$-group  and in particular, the only indecomposable projective $kA_\G(u)$-module (resp. irreducible $kA_\G(u)$-module) is the regular representation (resp. the trivial representation). This shows that $\Gamma_u^2 = A_\G(u)$ and, by \ref{sec:MGamma},  $\tilde{\mathcal{M}}_2(\Gamma^2_u)$ corresponds bijectively to the set of $F$-conjugacy classes of $A_{\G}(u)$. By \cite[3.25]{digne_representations_1991}, the $F$-conjugacy classes of $A_\G(u)$ parametrize the $G$-orbits in $C_u^F$. Summing over all unipotent $\G$-conjugacy classes, this shows that $\alpha_2$ equals the number of unipotent $G$-conjugacy classes, which by Proposition \ref{sec:countingirrkb1} equals $m_1(G)$.

    \begin{Rk}
If $\G$ is neither spin or half-spin, then, by looking at the proof of \cite[2.4]{taylor_unipotent_2013}, for every $F$-stable unipotent class of $\G$ there is a representant $u$ such that $F$ acts trivially on $A_G(u)$. Hence, it is enough to work with conjugacy classes of $A_G(u)$.
\end{Rk}

    \vspace{0.3cm}
     
\subsubsection{Exceptional groups}

For simple adjoint groups of exceptional type, the number of irreducible modular characters lying in the unipotent blocks has been determined by Geck--Hiss (see \cite[6.6]{geck_modular_1997}).

$$\begin{array}{|c|cccc|}
  \hline
  \text{Type} & \ell=2 & \ell=3 & \ell=5 & \ell \text{ good}\\
    \hline
  G_2 & 8 & 9 & & 10\\
  \hline
  F_4 & 28 & 35 & & 37\\
  \hline
  E_6 & 27 & 28 & & 30 \\
  \hline
  E_7 & 64 & 72 & & 76 \\
  \hline
  E_8 & 131 & 150 & 162 & 166\\
  \hline
\end{array}$$

\vspace{0.3cm}

    Using this table, we will show that for any exceptional adjoint group $G$ and any bad prime number $\ell$ we have $\alpha_{\ell}(G)=m_1(G)$. Let $S_W$ be the set of special characters of $W$ and let $\bar{S}_W$ be the image of the injective map $C_u \mapsto E_{u,1}$ from the set of unipotent classes of $\G$ to Irr$_K(W)$. We have $S_W \subset \bar{S}_W$. There is a nice description of the set $\bar{S}_W$ in term of $j$-induction of characters of maximal subgroups of $W$. More precisely, let us fix a set of roots $\Phi$ and a set of simple roots $\Delta= \{\alpha_1,\ldots, \alpha_n \}$. For $\alpha \in \Phi$, let $s_{\alpha} \in W$ be the corresponding reflection. We have $W=\langle s_{\alpha_i},1 \leq i \leq n \rangle$. Let $\alpha_0=\sum n_{\alpha_i}\alpha_i$ be the highest short root. For $i \in 0, \ldots, n$, let
    $$W_i=\langle s_{\alpha_j},0 \leq j \neq i \leq n \rangle$$
It was shown by Shoji (\cite{shoji_conjugacy_1974}, \cite{shoji_springer_1979}), Springer (\cite{springer_trigonometric_1976}), Alvis and Lusztig (\cite{alvis_springers_1982}) that $\bar{S}_W$ is exactly the set of irreducible characters of $W$ coming from $j$-induction of some special character of some $W_i$. This fact leads to the following definition.

%\vspace*{0.3cm} 

\begin{Def}
  Let $\ell$ be a prime number, $E \in \bar{S}_W$. We say that $E$ is \textit{$\ell$-special} if there is $i \in  1, \dots, n $ such that:
  \begin{itemize}    
  \item $E$ is coming from the $j$-induction of a special character of $W_i$
  \item $n_{\alpha_i}$ is a power of $\ell$
  \end{itemize}
\end{Def}

\vspace*{0.3cm} 

The classification of quasi-isolated semisimple elements by Bonnaf\'e in \cite{bonnafe_quasi-isolated_2005}  shows that every group of the form $W_i$ is the Weyl group of the centraliser of an isolated semisimple element $s_i$ of $G^*$ whose order is equal to $n_{\alpha_i}$. It turns out that $u$ is $\ell$-special if and only if $E_{u,1}$ is $\ell$-special, which justifies our definition. Using this property, we can sketch a strategy to compute $\alpha_{\ell}$.
% Moreover, by [ref] for any unipotent classes $\mathcal{O}$ , we can chose a representatives $u$ in $\mathcal{O}^F$ such that $F$ acts trivially on $A_{\G}(u)$, in particular $F$-conjugacy classes are simply conjugacy classes and if $x \in \Gamma^\ell_u$, $C_{\Gamma^\ell_u,F}(x)=C_{\Gamma_{^\ell_u}}(x)$. Using those facts, we can sketch a general strategy to compute $\alpha_\ell$.

\medskip
\noindent {\bf First step.} Finding every $\ell$-special unipotent classes:
  \begin{enumerate}
  \item Determine all maximal subgroups $W_i^*$ of $W^*$ the Weyl group of $\G^*$;
  \item By using $j$-induction tables, find every $\ell$-special irreducible characters of $W^*$;
  \item For every $\ell$-special character $E$, use the Springer correspondence to get the unipotent class $C_u$ of $\G$ such that $E=E_{u,1}$ where $E$ is viewed as a representation of $W$ via the natural isomorphism $W \simeq W^*$.
  \end{enumerate}
\noindent{\bf Second step.} Now that we have every $\ell$-special class, compute $\alpha_{\ell}$:
  \begin{enumerate}
  \item For each $F$-stable $\ell$-special class $C_u$, compute the
    $\ell$-special quotient $\Gamma^{\ell}_u$ of $A_G(u)$;
  \item For each conjugacy class $(x)$ of $\Gamma^\ell_u$, compute $|\Irr_k(C_{\Gamma^\ell_u}(x))$| by counting the number of $\ell'$-classes. Let $\alpha_{\ell, u}$ be the sum over conjugacy classes of the numbers obtained this way;
  \item Then $\alpha_\ell$ is equal to $\sum_u \alpha_{\ell, u}$ where $u$ runs over the set of $\ell$-special classes.
  \end{enumerate}

 \smallskip
 
 Calculations were made for adjoint exceptional groups using CHEVIE (\cite{michel_development_2015}). For each group of exceptional type $\G$ and each bad prime $\ell$, we provide in the appendix tables listing $\ell$-special classes of $\G$ (we use the same labelling as in CHEVIE) and for each special class $C_u$ the groups $A_{\G}(u)$, $\Gamma^\ell_u$ and the number $\alpha_{\ell,u}$.

\vspace{0.3cm}
 
\subsubsection{ $A_n$ type}

Suppose that $\G$ is $\mathrm{SL}_n(\overline{\F}_p)$, in which case $G$ is either $\mathrm{SL}_n(q)$ or $\mathrm{SU}_n(q)$. By \cite{kleshchev_representations_2009} and \cite{denoncin_stable_2017} we know that $G$ has a unitriangular basic set, and by carefully studying how this basic set has been obtained, we will show that $\alpha_{\ell}$ is equal to $m_1$ in this case. Recall that unipotent classes of $\G$ are parametrised by partitions of $n$. We denote by $C_{\lambda}$ the unipotent class corresponding to the partition $\lambda=(\lambda_1,\ldots,\lambda_r)$ and by $u_{\lambda} \in C_{\lambda}^F$  a rational representative in this class. Then $A_{\G}(u_{\lambda})$ is a cyclic group of order $m_{\lambda}:=\gcd(\lambda_1,\ldots,\lambda_r)_{p'}$.

\smallskip

Let us compute $\Gamma^{\ell}_{u_{\lambda}}$ and then $\alpha_{\ell,u_{\lambda}}$. With the notation of \ref{sec:ell-special-quo}, recall that $\Gamma^{\ell}_{u_{\lambda}}$ is the smallest quotient of $A_{\G}(u_{\lambda})$ through which every projective character in $\mathcal{S}_{\ell}$ factors. Since the only representation of $A_{\G}(u_{\lambda})$ whose Springer correspondent is non-zero is the trivial representation, the only projective character in $\mathcal{S}_\ell$ is the sum of the irreducible characters whose image by the decomposition map are trivial (namely the $\ell$-characters of the cyclic group $A_{\G}(u_{\lambda})$). The kernel of this projective character is the cyclic subgroup of $A_{\G}(u_{\lambda})$ of order $(m_{\lambda})_{\ell'}$, so $\Gamma^{\ell}_{u_{\lambda}}$ is a cyclic group of order $(m_{\lambda})_{\ell}$. In particular, $\Gamma^{\ell}_u$ is a $\ell$-group so for any $x \in \tilde{\Gamma}^{\ell}_u$, $|\Irr_k(C_{\Gamma^{\ell}_{u_{\lambda}}}(x))|=1$. Hence, according to \ref{sec:MGamma}, $\alpha_{\ell, u_{\lambda}}$ is the number of $F$-classes of $\Gamma^{\ell}_{u_{\lambda}}$, that is $\gcd(m_{\lambda},q+ \varepsilon)_{\ell}$ where $\varepsilon=1$ if $G=SL_n(q)$ and $\varepsilon=-1$ if $G=SU_n(q)$.

\smallskip

Let us explain briefly how the basic set mentioned above is obtained: let $\widetilde{\G}:=\mathrm{GL}_n(\overline{\F}_p)$, and $\widetilde{G}$ be $\mathrm{GL}_n(q)$ or $\mathrm{GU}_n(q)$. By \cite{dipper_decomposition_1985} and \cite{geck_decomposition_1991}, the unipotent characters  form a unitriangular basic set for the unipotent blocks of $\widetilde{G}$. By slightly changing  this basic set, we get a basic set $\mathcal{B}$ for the unipotent block of $G$ with the following properties (see \cite{denoncin_stable_2017} for more details):

\begin{itemize}
\item For each unipotent classes $C_{\lambda}$ of $\widetilde{\G}$, there is a unique element $\rho_{\lambda}$ of $\mathcal{B}$ whose wave front set is $C_{\lambda}$;
\item The restriction of $\rho_{\lambda}$ to $G$ has no multiplicity and has $\alpha_{\ell,u_{\lambda}}$ irreducible constituents (using the previous computation for $\alpha_{\ell, u_{\lambda}}$).
\end{itemize}
This shows that $m_1(G)=\alpha_{\ell}$.

\vspace*{0.5cm}

\subsubsection{Conclusion}

We state here a theorem compiling the results obtained above.
\begin{The}
  Suppose that $p$ is good for $\G$ and that we are in one of the following cases
\begin{itemize}
\item $\G$ is simple of type $B$, $C$ or $D$.
\item $\G$ is an exceptional group of adjoint type.
\item $\G$ is $\mathrm{SL}_n(\overline{\F}_p)$ (and therefore $G$ is either $\mathrm{SL}_n(q)$ or $\mathrm{SU}_n(q)$).
\end{itemize}  
 Then the number $\alpha_\ell$ of  pairs $(u, x)$ where $u$ is an $\ell$-special unipotent element of $\G$ up to $\G$-conjugacy and $x \in \mathcal{M}_\ell(\Gamma^{\ell}_u)$ is equal to $m_1$, the number of unipotent irreducible Brauer characters.
\end{The}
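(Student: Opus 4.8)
The plan is to prove the theorem by treating each of the three families of groups separately, since the hypotheses naturally split this way and each case has already been essentially worked out in the preceding subsections. The overall strategy is therefore one of assembly: in each case one must identify the $\ell$-special unipotent classes, compute the $\ell$-special quotients $\Gamma^\ell_u$, evaluate the cardinalities $\alpha_{\ell,u} = |\tilde{\mathcal{M}}_\ell(\Gamma^\ell_u)|$, sum to obtain $\alpha_\ell$, and then match this against the known value of $m_1$.

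\begin{proof}
We treat the three cases separately.

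\textbf{Classical type $B$, $C$, $D$.} Here $\ell = 2$ is the only bad prime, so by Remark (iii) following the definition of $\ell$-special, every unipotent element of $\G$ is $2$-special. Since $A_\G(u)$ is a $2$-group for every unipotent $u$, the only indecomposable projective (resp. irreducible) $kA_\G(u)$-module is the regular (resp. trivial) representation, whence $\Gamma^2_u = A_\G(u)$ and, by Remark \ref{sec:MGamma}, $\tilde{\mathcal{M}}_2(\Gamma^2_u)$ is in bijection with the set of $F$-conjugacy classes of $A_\G(u)$. By \cite[3.25]{digne_representations_1991} these parametrise the $G$-orbits in $C_u^F$, so summing over all unipotent $\G$-classes gives that $\alpha_2$ equals the number of unipotent $G$-conjugacy classes, which by Proposition \ref{sec:countingirrkb1} equals $m_1(G)$.

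\textbf{Exceptional adjoint type.} For each exceptional type $\G$ and each bad prime $\ell$, the value of $m_1$ is given by the table of Geck--Hiss \cite[6.6]{geck_modular_1997} reproduced above. On the other hand, $\alpha_\ell$ is computed by the two-step procedure described in the previous subsection: one first locates the $\ell$-special classes via $j$-induction from the maximal subgroups $W_i$ together with the Springer correspondence, and then, using the fact (from \cite[2.4]{taylor_unipotent_2013}) that every $F$-stable unipotent class of an adjoint exceptional group has a representative on which $F$ acts trivially, one computes each $\Gamma^\ell_u$ and counts pairs $(x,\sigma)$ with $x$ ranging over conjugacy classes of $\Gamma^\ell_u$ and $\sigma \in \Irr_k(C_{\Gamma^\ell_u}(x))$. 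These calculations were carried out in CHEVIE \cite{michel_development_2015}; the resulting tables in the appendix give, case by case, that $\sum_u \alpha_{\ell,u} = m_1(G)$ for every exceptional adjoint $\G$ and every bad $\ell$.

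\textbf{Type $A_n$.} Suppose $\G = \mathrm{SL}_n(\overline{\F}_p)$. Unipotent classes are parametrised by partitions $\lambda$ of $n$, with $A_\G(u_\lambda)$ cyclic of order $m_\lambda = \gcd(\lambda_1,\ldots,\lambda_r)_{p'}$. As computed above, the only projective in $\mathcal{S}_\ell$ is the sum of the $\ell$-characters of this cyclic group, whose kernel has order $(m_\lambda)_{\ell'}$, so $\Gamma^\ell_{u_\lambda}$ is cyclic of order $(m_\lambda)_\ell$; being an $\ell$-group, $|\Irr_k(C_{\Gamma^\ell_{u_\lambda}}(x))| = 1$ for all $x$, so $\alpha_{\ell,u_\lambda}$ is the number of $F$-classes of $\Gamma^\ell_{u_\lambda}$, namely $\gcd(m_\lambda, q+\varepsilon)_\ell$ with $\varepsilon = 1$ for $\mathrm{SL}_n(q)$ and $\varepsilon = -1$ for $\mathrm{SU}_n(q)$. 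On the other hand, by \cite{kleshchev_representations_2009} and \cite{denoncin_stable_2017} the group $G$ has a unitriangular basic set $\mathcal{B}$, obtained from the unipotent basic set of $\widetilde{G} = \mathrm{GL}_n(q)$ or $\mathrm{GU}_n(q)$: for each unipotent class $C_\lambda$ of $\widetilde{\G}$ there is a unique $\rho_\lambda \in \mathcal{B}$ with wave front set $C_\lambda$, and the restriction of $\rho_\lambda$ to $G$ is multiplicity-free with exactly $\alpha_{\ell,u_\lambda}$ irreducible constituents. Since the $\rho_\lambda$ exhaust $\mathcal{B}$, this yields $m_1(G) = \sum_\lambda \alpha_{\ell,u_\lambda} = \alpha_\ell$, completing the proof.
\end{proof}

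The main obstacle, and the part that does genuine work, is the exceptional case: it rests on the explicit CHEVIE computation of the $\ell$-special quotients $\Gamma^\ell_u$ and the associated counts, and the correctness of the whole approach is only \emph{verified} there rather than proved conceptually. The classical and type-$A$ cases are comparatively soft, reducing respectively to Proposition \ref{sec:countingirrkb1} and to the detailed analysis of the Kleshchev--Tiep/Denoncin basic set; in both, the key simplification is that the relevant quotient groups are $\ell$-groups (or $2$-groups), so that counting $\Irr_k$ over centralisers collapses to counting $F$-conjugacy classes.
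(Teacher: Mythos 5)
Your proposal is correct and follows essentially the same route as the paper: the theorem there is stated as a compilation of the three preceding subsections, and your case-by-case argument (classical type via $\Gamma^2_u = A_\G(u)$, $F$-conjugacy classes and Proposition \ref{sec:countingirrkb1}; exceptional adjoint type via the CHEVIE tables matched against the Geck--Hiss values; type $A$ via the computation of $\Gamma^\ell_{u_\lambda}$ and the Kleshchev--Tiep/Denoncin basic set) reproduces exactly that assembly. No gaps to report.
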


\vspace*{0.5cm}

\section*{Acknowledgements}
I would like to thank my PhD advisors Olivier Brunat and Olivier Dudas for their guidance and their encouragement. I would also like to show my gratitude to Prof. Gunter Malle for his useful comments.

  \newpage

%\newcolumntype{C}[1]{>{\centering\\\arraybackslash\vspace{2pt}}m{#1}}
\newcolumntype{C}[1]{>{\centering\arraybackslash}p{#1}}

\section*{Appendix: $\ell$-special unipotent classes for exceptional groups ($\ell$ bad)}

Here $A_G(u)$ denote the group of connectod component of the class, $\Gamma^\ell_u$ denote the $\ell$-special quotientand $\alpha{_\ell,u}$ the cardinal of $\tilde{\mathcal{M}}_\ell(\Gamma^\ell_u)$ (see Definition \ref{sec:ell-special-quo}). Having blank entries for $\Gamma^\ell_u$ and $\alpha_{\ell,u}$ means that the corresponding class is not $\ell$-special.

\subsection*{Type $G_2$}

$$\begin{array}{|c|c|cc|cc|}
  \hline
     \text{Classes} & A_G(u) & \Gamma^2_u & \alpha_{2,u} & \Gamma^3_u & \alpha_{3,u}  \\
     \hline
    1 & 1 & 1 & 1 & 1 & 1\\
  A_1 & 1 &  &  & 1 & 1  \\  
  \widetilde{A}_1 & 1 & 1  & 1 &  &   \\
    G_2(a_1) & S_3 & S_3 & 6 & S_3 & 5 \\
     G_2 & 1 & 1 & 1  & 1 & 1\\
     \hline
   \end{array}$$

\subsection*{Type $F_4$}
\begin{center}
$  \begin{array}{|c|c|cc|cc|}
  \hline
  \text{Classes} & A_G(u) & \Gamma^2_u & \alpha_{2,u}  & \Gamma^3_u & \alpha_{3,u} \\
  \hline
    1 & 1 & 1 & 1 & 1 & 1 \\
    
    A_1  & 1 & 1 & 1  & & \\
         
  \tilde{A_1} & S_2 & S_2 & 2 & S_2 & 4 \\
  
    A_1+\tilde{A_1} & 1 & 1 & 1 & 1 & 1 \\
  
    \tilde{A_2} & 1 & 1 & 1 &1 & 1\\
  
    A_2 & S_2 & S_2 & 2 & 1 & 1\\
    
     A_2+\tilde{A_1} & 1 & 1 & 1 & & \\
     
    \tilde{A_2}+A_1 &1 & &  & 1 & 1 \\
    
    B_2 & S_2 & S_2 & 2 & & \\
    
    C_3(a_1) & S_1 & S_1 & 2 & &\\ 
    
    F_4(a_3) & S_4 & S_4 & 8 & S_4 & 18\\
    
    C_3 & 1 & 1 & 1 & 1 & 1 \\
    
    B_3 & 1 & 1 & 1 & 1 & 1\\
    
    F_4(a_2) & S_2 & S_2 & 2 & 1 & 1 \\
     
    F_4(a_1) & S_2 & S_2 & 2 & S_2 & 4\\
    
    F_4 & 1 & 1 & 1 & 1 & 1 \\
    
    \hline
   \end{array} $
 \end{center}

\pagebreak

\subsection*{Type $E_6$}

 \begin{center}
  $\begin{array}{|c|c|cc|cc|}
  \hline
  \text{Classes} & A_G(u) & \Gamma^2_u & \alpha_{2,u} & \Gamma^3_u & \alpha_{3,u} \\
  \hline
    1 & 1 & 1 & 1 & 1 &1  \\
  
  A_1 & 1 & 1 & 1 & 1 & 1\\
    
    2A_1 & 1 & 1 & 1 & 1 & 1\\
    
    3A_1 & 1 & 1 & 1 & & \\
    
    A_2 & S_2 & S_2 & 2 & S_2 & 4\\
    
    A_2+A_1 & 1 & 1 & 1 & 1 & 1\\
    
    A_2+2A_1 & 1 & 1 & 1 & 1 & \\
    
    2A_2 & 1 & 1 & 1 & 1 & 1 \\
     
    2A_2+A_1 & 1 & & & 1 & 1 \\

    A_3 & 1 & 1 & 1 & 1 & 1 \\
    
    A_3+A_1 & 1 & 1 & 1 & & \\
    
    D_4(a_1) & S_3 & S_3 & 6 & S_3 & 5 \\
    
    A_4 & 1 & 1 & 1 & 1 & 1 \\
    
    D_4 & 1 & 1 & 1 & 1 & 1 \\
    
    A_4+A_1 & 1 & 1 & 1 & 1 &1 \\
    
    D_5(a_1) & 1 & 1 & 1 & 1 & 1 \\
    
    A_5 & 1 & 1 & 1 & & \\
    
    E_6(a_3) & S_2 & S_2 & 2 & S_2 & 4 \\
    
    D_5 & 1 & 1 & 1 & 1 & 1 \\
    
    E_6(a_1) & 1 & 1 & 1 & 1 & 1 \\
    
    E_6 & 1 & 1 & 1 & 1 & 1\\
    \hline
\end{array}$
\end{center}

\pagebreak

  \subsection*{Type $E_7$}

\begin{center}
  $\begin{array}{|c|c|cc|cc|}
  \hline
  \text{Classes} & A_G(u) & \Gamma^2_u & \alpha_{2,u} & \Gamma^3_u & \alpha_{3,u} \\
  \hline
    1 & 1 & 1 & 1 & 1 & 1 \\
  
    A_1 & 1 & 1 & 1 & 1 & 1\\
  
    2A_1 & 1 & 1 & 1 & 1 & 1 \\
  
    3A_1' & 1 & 1 & 1 &  &  \\
    
    3A_1'' & 1 & 1 & 1 & 1 & 1 \\

    A_2 & S_2 & S_2 & 2 & S_2 & 4 \\
    
    4A_1 & 1 & 1 & 1 & & \\
    
    A_2+A_1 & S_2 & S_2 & 2 & S_2 & 4 \\
    
    A_2+2A_1 & 1 & 1 & 1 & 1 & 1 \\
    
    A_2+3A_1 & 1 & 1 & 1 & 1 & 1 \\
    
    A_3 & 1 & 1 & 1 & 1 & 1 \\
    
    2A_2 & 1 & 1 & 1 & 1 & 1 \\
     
    2A_2 + A_1 & 1 & & 1 & 1 & 1 \\
    
    (A_3+A_1)'' & 1 & 1 & 1 & 1 & 1 \\
    
    (A_3+A_1)' & 1 & 1 & 1 & & \\
    
    A_3+2A1 & 1 & 1 & 1 & & \\
    
    D_4(a_1) & S_3 & S_3 & 6 & S_3 & 5 \\
     
    D_4(a_1)+A_1 & S_2 & S_2 & 2 & S_2 & 4\\
    
    A_3+A_2 & S_2 & S_2 & 2 & 1 & 1\\
    
    A_4 & S_2 & S_2 & 2 & S_2 & 4\\
    
    A_3+A_2+A_1 & 1 & 1 & 1 & 1 & 1 \\
    
    D_4 & 1 & 1 & 1 & 1 & 1 \\
    
    A_4+A_1 & S_2 & S_2 & 2 & S_2 & 4 \\
    
    D_4+A_1 & 1 & 1 & 1 &  &  \\
    
    D_5(a_1) & S_2 & S_2 & 2 & S_2 & 4\\
    
    A_5'' & 1 & 1 & 1 & 1 & 1\\
    
    A_4+A_2 & 1 & 1 & 1 & 1 & 1\\
     
    A_5+A_1 & 1 & & 1 & 1 & 1\\
    
    D_5(a_1)+A_1 & 1 & 1 & 1 & 1 & 1 \\
    
    A_5' & 1 & 1 & 1 & & \\
    
    E_6(a_3) & S_2 & S_2 & 2 & S_2 & 4 \\
    
    D_6(a_2) & 1 & 1 & 1 & & \\
    
    D_5 & 1 & 1 & 1 & 1 & 1\\
    
    E_7(a_5) & S_3 & S_3 & 6 & S_3 & 5 \\
    
    D_6(a_1) & 1 & 1 & 1 & 1 & 1\\
    
    D_5+A_1 & 1 & 1 & 1 & 1 & 1\\
    
    A_6 & 1 & 1 & 1 & 1 & 1\\
    
    E_7(a_4) & S_2 & S_2 & 2 & 1 & 1\\
    
    D_6 & 1 & 1 & 1 & & \\
    
    E_6(a_1) & S_2 & S_2 & 2 & S_2 & 4 \\
    
    E_7(a_3) & S_2 & S_2 & 2 & S_2 & 4 \\
    
    E_6 & 1 & 1 & 1 & 1 & 1\\
    
    E_7(a_2) & 1 & 1 & 1 & 1 & 1\\
    
    E_7(a_1) & 1 & 1 & 1 & 1 & 1\\
    
    E_7 & 1 & 1 & 1 & 1 & 1\\
    \hline
\end{array}$
\end{center}

\pagebreak

  \subsection*{Type $E_8$}

\begin{center}
 $\begin{array}{|c|c|cc|cc|cc|}
  \hline
  \text{Classes} & A_G(u) & \Gamma^2_u & \alpha_{2,u} & \Gamma^3_u & \alpha_{3,u} & \Gamma^5_u & \alpha_{5,u} \\
  \hline
    1 & 1 & 1 & 1 & 1 & 1 & 1 & 1 \\
  
  A_1 & 1 & 1 & 1 & 1 & 1 & 1 & 1 \\
  
    2A_1 & 1 & 1 & 1 & 1 & 1 & 1 & 1 \\
  
  3A_1 & 1 & 1 & 1 & & & &  \\
    
    A_2 & S_2 & S_2 & 2 & S_2 & 4 & S_2 & 4 \\
    
    4A_1 & 1 & 1 & 1 & & & & \\
    
    A_2+A_1 & S_2 & S_2 & 2 & S_2 & 4 & S_2 & 4 \\
    
    A_2+2A_1 & 1 & 1 & 1 & 1 & 1 & 1 & 1 \\
    
    A_2{+}3A_1 & 1 & 1 & 1 & & & & \\
    
    2A_2 & 2 & 2 & 2 & S_2 & 4 & S_2 & 4\\
    
    A_3 & 1 & 1 & 1 & 1 & 1 & 1 & 1 \\

    2A_2{+}A_1 & 1 & & & 1 & 1 & & \\
    
    A_3{+}A_1 & 1 & 1 & 1 & & & & \\

    2A_2{+}2A_1 & 1 & & 1 & 1 & 1 & & \\
    
    D_4(a_1) & S_3 & S_3 & 6 & S_3 & 5 & S_3 & 8\\
    
    A_3+2A_1 & 1 & 1 & 1 & & & & \\
    
    D_4(a_1)+A_1 & S_3 & S_3 & 6 & S_3 & 5 & S_3 & 8 \\
    
    A_3{+}A_2 & S_2 & S_2 & 2 & S_2 & 1 & 1 & 1\\
    
    A_3{+}A_2{+}A_1 & 1 & 1 & 1 & & & &  \\
    
    D_4(a_1){+}A_2 & S_2 & S_2 & 2 & S_2 & 4 & S_2 & 4 \\
    
    A_4 & S_2 & S_2 & 2 & S_2 & 4 & S_2 & 4 \\
    
    D_4 & 1 & 1 & 1 & 1 & 1 & 1 & 1 \\
    
    2A_3 &  1 & 1 &  1 & & & & \\
     
    A_4{+}A_1 & S_2 & S_2 & 2 & S_2 & 4 & S_2 & 4 \\
       
    D_4{+}A_1 & 1 &  1 &  1 & & & & \\
        
    A_4{+}2A_1 & S_2 & S_2 & 2 & S_2 & 4 & S_2 & 4 \\
    
    A_4{+}A_2 & 1 & 1 & 1 & 1 & 1 & 1 & 1\\
    
    D_5(a_1) & S_2 & S_2 & 2 & S_2 & 4 & S_2 & 4\\
    
    D_5(a_1){+}A_1 & 1 & 1 & 1 & 1 & 1 & 1 & 1\\
    
    A_4{+}A_2{+}A_1 & 1 & 1 & 1 & 1 & 1 & 1 & 1\\
    
    A_5 & 1 & 1 & 1  & & & & \\
    
    D_4{+}A_2 & S_2 & S_2 & 2 & 1 & 1 & 1 & 1\\

    A_4{+}A_3 & 1 & & & & & 1 & 1 \\
    
    D_5(a_1){+}A_2 & 1 & 1 & 1 & & & &  \\

    A_5{+}A_1 & 1 & & & & & & \\
    
    E_6(a_3) & S_2 & S_2 & 2 & & &S_2 & 4 \\

    D_6(a_2) & S_2 & S_2 & 2 & & & & \\
    
    E_6(a_3){+}A_1 & S_2 & & S_2 & 4 & & &  \\
    
    D_5 & 1 & 1 & 1 & 1 & 1 & 1 & 1 \\
    
    E_7(a_5) & S_3 & S_3 & 6 & & & & \\
    
    D_5{+}A_1 & 1 & 1 & 1 & & & & \\
    
    E_8(a_7) & S_5 & S_5 & 18 & S_5 & 27 & S_5 & 34\\
    
    A_6 & 1 & 1 & 1 & 1 & 1 & 1 & 1 \\
    
    D_6(a_1) & S_2 & S_2 & 2 & S_2 & 4 & S_2 & 4 \\
     
    A_6{+}A_1 & 1 & 1 & 1 & 1 & 1 & 1 & 1 \\
    
    E_7(a_4) & S_2 & S_2 & 2 & 1 & 1 & 1 & 1 \\

\hline
        \end{array}$

      \end{center}

      \pagebreak

      Type $E_8$ -continued

\vspace*{0.1cm}
      
 \begin{center}
  $\begin{array}{|c|c|cc|cc|cc|}
    \hline
  \text{Classes} & A_G(u) & \Gamma^2_u & \alpha_{2,u} & \Gamma^3_u & \alpha_{3,u} & \Gamma^5_u & \alpha_{5,u} \\
    \hline

    D_5{+}A_2 & S_2 & S_2 & 2 & 1 & 1 & 1 & 1 \\
    
    E_6(a_1) & S_2 & S_2 & 2 & S_2 & 4 & S_2 & 4 \\

    D_7(a_2) & S_2 & S_2 & 2 & S_2 & 4 & S_2 & 4 \\
    
    A_7 & 1 & 1 & 1 & & & & \\
    
    E_6(a_1){+}A_1 & S_2 & S_2 & 2 & S_2 & 4 & S_2 & 4 \\ 

    D_6 & 1 & 1 & 1 & & & & \\
      
    E_8(b_6) & S_3 & S_3 & 2 & S_3 & 5 & S_2 & 4 \\
    
    E_7(a_3) & S_2 & S_2 & 2 & S_2 & 4 & S_2 & 4\\
    
    E_6 & 1 & 1 & 1 & 1 & 1 & 1 & 1 \\
    
     D_7(a_1) & S_2 & S_2 & 2 & 1 & 1 & 1 & 1 \\
     
     E_6+A_1 & 1 & & & 1 & 1 & & \\
    
    E_8(a_6)  & S_3 & S_3 & 6 & S_3 & 5 & S_3 & 8 \\
    
    E_7(a_2) & 1 & 1 & 1 & & & & \\
    
    D_7 & 1 & 1 & 1 & & & & \\
    
    E_8(b_5) & S_3 & S_3 & 6 & S_3 & 5 & S_3 & 8 \\
    
    E_8(a_5) & S_2 & S_2 & 2 & S_2 & 4 & S_2 & 4\\

    E_7(a_1) & 1 & 1 & 1 & 1 & 1 & 1 & 1 \\
       
    E_8(b_4) & S_2 & S_2 & 2 & 1 & 1 & 1 & 1 \\
    
    E_8(a_4) & S_2 & S_2 & 2 & S_2 & 4 & S_2 & 4 \\
    
    E_7 & 1 & 1 & 1 & & & & \\
    
    E_8(a_3) & S_2 & S_2 & 2 & S_2 & 4 & S_2 & 4\\
    
    E_8(a_2) & 1 & 1 & 1 & 1 & 1 & 1 & 1 \\
    
    E_8(a_1) & 1 & 1 & 1 & 1 & 1 & 1 & 1 \\
        
    E_8 & 1 & 1 & 1 & 1 & 1 & 1 & 1 \\  
     \hline
\end{array}$
\end{center}

\newpage

\bibliography{bib}{}
\bibliographystyle{alpha}

\end{document}